\documentclass[11pt,oneside]{amsart}
\usepackage[utf8]{inputenc}

\usepackage{amsmath, comment}
\usepackage{amsthm, amsfonts, amssymb,latexsym, color, tikz}
\usepackage{url}
\usepackage{enumitem}
\title[Geometric Sidon Problems]{Geometric Sidon Problems}
\author{}
\usepackage{amsmath}
\usepackage[a4paper, total={6in, 8in}]{geometry}
\newcommand{\cG}{\mathcal{G}}
\newcommand{\cH}{\mathcal{H}}

\newcommand{\orn}[1]{{{\color{red}#1}}}

\numberwithin{exercise}{subsection}
\newtheorem{lemma}{Lemma}
\newtheorem{obs}{Observation}

\newtheorem{theorem}{Theorem}
\newtheorem*{theorem*}{Theorem}
\newtheorem{cor}{Corollary}

\author{Felix Christian Clemen}
\address{Felix Christian Clemen\\ Department of Mathematics and Statistics, University of Victoria, Victoria, B.C., Canada.}
\email{fclemen@uvic.ca}

\author{Jakob Führer}
\address{Jakob Führer \\ Institute for Algebra, Johannes Kepler University Linz, Linz, Austria.}
\email{jakob.fuehrer@jku.at}

\author{Oliver Roche-Newton}
\address{Oliver Roche-Newton \\ Institute for Algebra, Johannes Kepler University Linz, Linz, Austria.}
\email{o.rochenewton@gmail.com}

\begin{document}
\begin{abstract}
This paper considers geometric problems of the following type: given a point set $P \subset \mathbb R^2$, one seeks a large subset avoiding a prescribed geometric
configuration. Our main result states that, for any $P \subset \mathbb R^2$, there exists a subset $P' \subset P$ with $|P'| \gg |P|^{1/3}$ such that all of the distances determined by $P'$ are distinct. This improves a result of Charalambides. We make heavy use of a result of Li and Postle concerning the independence number of hypergraphs which satisfy some edge distribution conditions, as well as tools from incidence geometry.

\end{abstract}
\maketitle

\section{Introduction}
A recurring theme in discrete geometry is the following subset selection problem:
given a finite point set, how large a subset can one guarantee that avoids a prescribed geometric configuration? Classical examples include the no-three-in-line problem~\cite{Dudeney1917PuzzleWithPawns} and related questions for integer grids, where one seeks large subsets avoiding collinear triples~\cite{Erdos1986}, repeated slopes~\cite{ErdosGrahamRuzsaTaylor1992Dots}, or other small patterns~\cite{JanosikNadorNagySimon2026}. In this paper we study two instances of this theme: selecting subsets with all pairwise distances distinct, and selecting subsets of the integer grid containing no rhombus.

Such problems have a long history in extremal combinatorics and discrete geometry, going back in particular to work of Erdős and Purdy~\cite{ErdosPurdy1971ExtremalGeometry} on forbidden configurations in point sets. They remain an active area of research,
with recent work addressing related subset selection problems for planar grids~\cite{BaloghSubsetSelection,Ghosal2026}. Recently, they have also appeared as benchmark problems in artificial intelligence~\cite{Wagner,georgiev2025mathematical}, due to their simple formulations and computational difficulty.

\subsection*{Notation}

Throughout this paper, the notation $X\gg Y$, $Y \ll X,$ $X=\Omega(Y)$, and $Y=O(X)$ are all equivalent and mean that $X\geq cY$ for some absolute constant $c>0$. The notation $X \gg_k Y$ indicates that $X \geq c_kY$ for some positive constant $c_k$ depending on a parameter $k$. The notation $X \approx Y$ and $X=\Theta (Y)$ denote that both $X \gg Y$ and $X \ll Y$ hold. The notation $\| \cdot \|$ always refers to the Euclidean norm.

\subsection*{Distance Sidon sets in general point sets.}
Given a finite set $P \subset \mathbb R^2$, let $Q(P)$ denote the number of solutions to the equation
\[
\|p-q\|=\|s-t\|
\]
with $p,q,s,t \in P$. We refer to such a solution $(p,q,s,t)$ as a \emph{distance quadruple}. The set $P$ is called a \emph{distance Sidon set} if all solutions are trivial, that is, if $\{p,q\}=\{s,t\}$.

Erd\H{o}s\footnote{The problem was also raised in a much earlier paper of Erd\H{o}s \cite{Er57} which was written in Hungarian. It is indexed as Problem 1208 on Thomas Bloom's Erd\H{o}s Problems Website.} \cite{Er80} raised the question of how large a distance Sidon subset one can guarantee inside an arbitrary finite point set. Using a deletion argument combined with the Guth-Katz \cite{GuKa15} bound on the number of distance quadruples, Charalambides~\cite{charalambides2013note} showed that every $P \subset \mathbb R^2$ contains a subset $P' \subset P$ of size
\[
|P'| \gg \frac{|P|^{1/3}}{(\log |P|)^{1/3}}
\]
that is distance Sidon. We remove the logarithmic factor in this bound and prove the following.

\begin{theorem}\label{thm:main}
For any finite $P \subset \mathbb R^2$, there exists $S \subset P$ such that $|S| \gg |P|^{1/3}$ and $S$ is a distance Sidon set.
\end{theorem}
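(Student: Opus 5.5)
We model the problem as finding a large independent set in a hypergraph on the vertex set $P$, with $n=|P|$. Every nontrivial solution of $\|p-q\|=\|s-t\|$ involves either three distinct points (an isosceles configuration $\|p-q\|=\|p-t\|$) or four distinct points. So let $\cH$ have a $3$-edge $\{p,q,t\}$ for each isosceles triple and a $4$-edge $\{p,q,s,t\}$ for each pair of distinct pairs $\{p,q\}\ne\{s,t\}$ spanning four points with equal distance. A subset of $P$ is a distance Sidon set exactly when it is independent in $\cH$. The target $n^{1/3}$ is what a Li--Postle type bound gives for a $4$-uniform hypergraph with about $n^3$ edges, where the naive deletion bound is $n^{1/3}/(\log n)^{1/3}$ and we want to gain the log. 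Concretely, Guth--Katz gives $Q(P)\ll n^3\log n$, so the average $4$-degree is about $d=n^2\log n$. The Li--Postle (Ajtai--Komlós--Pintz--Spencer--Szemerédi type) theorem gives independence number $\gg (n/d^{1/3})(\log d)^{1/3} \approx n^{1/3}$, provided the hypergraph is suitably uncrowded or has bounded codegrees.

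\textbf{Step 1: clean up the hypergraph.} First handle the $3$-edges. The number of isosceles triples is $O(n^{2+\epsilon})$, and in fact $O(n^{7/3})$ by Pach--Tardos type bounds. Passing to a random subset of density $p$ kills most $3$-edges at a cost that is affordable at the scale $n^{1/3}$. Alternatively, keep the $3$-edges and use the version of Li--Postle that allows mixed uniformities, requiring $\Delta_3\ll d^{2/3}$ up to logs. Next, prune vertices of abnormally large $4$-degree. Since the total number of $4$-edges is $O(n^3\log n)$, at least half the points have $4$-degree $O(n^2\log n)$, and we restrict to those. This gives the maximum-degree condition.

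\textbf{Step 2: codegree conditions.} The Li--Postle condition requires that the number of edges containing a fixed pair, and a fixed triple, be smaller than the corresponding power of $d$ by a factor $d^{-\epsilon}$ or a power of $\log d$. For a triple $\{p,q,s\}$, the fourth point $t$ lies on one of three circles, and the relevant pairings give $O(n)$ choices. This is far below $\Delta^{1/3}\approx n^{2/3}$ only after sparsification, so I would first take a random subset of size $m=n^{1-\delta}$ or similar, rescaling so the codegrees become acceptable. For a pair $\{p,q\}$, the count is governed by point--circle incidences: the pairs $\{s,t\}$ with $\|s-t\|=\|p-q\|$ plus configurations like $\|p-s\|=\|q-t\|$. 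These are bounded via incidence results (Szemerédi--Trotter for circles/bisectors, unit-distance type bounds $O(n^{4/3})$ per distance) to get pair-codegree $\ll n^{4/3+o(1)}$, versus the requirement $\Delta^{2/3}\approx n^{4/3}$ times a saving. The pair codegree is thus borderline, and getting a genuine power or log saving is the delicate point.

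\textbf{Main obstacle.} The hardest step is the pair-codegree bound $\Delta_2 \ll \Delta^{2/3}/(\log)^{c}$. My first attempt would be to show that pairs with large codegree are rare, counting them with Guth--Katz-style bisector or incidence arguments, and to delete one point from each such pair via a further deletion/random-sampling step while keeping $\Omega(n)$ vertices. Alternatively, I would check whether Li--Postle requires only an averaged codegree condition, which Guth--Katz plus incidence bounds on rich circles should supply. Once the conditions hold, the theorem yields an independent set of size $\gg (n/\Delta^{1/3})(\log\Delta)^{1/3}\gg n^{1/3}$, exactly cancelling the $\log$ from Guth--Katz.
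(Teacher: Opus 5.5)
Your setup matches the paper's: a hypergraph on $P$, Guth--Katz giving average $4$-degree $O(n^2\log n)$, pruning of high-degree vertices, and Li--Postle to win back $(\log n)^{1/3}$. However, the proof lives entirely in the codegree hypotheses, you establish none of them, and both mechanisms you propose fail.

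\textbf{Sparsification does not help.} Keeping points with probability $\rho$ multiplies $\overline{M}$ by about $\rho$, triple codegrees by about $\rho$ and pair codegrees by about $\rho^2$. So $\Delta_{3,4}/\overline{M}$ and $\Delta_{2,4}/\overline{M}^2$ do not improve, and your $O(n)$ triple bound stays a factor $n^{1/3}$ too large. What works is deterministic. For a fixed triple, the fourth point lies on a circle centred at one of the three given points. By the rich-circle bound (Corollary~\ref{cor:richcircles}), only $O(n^{1/2})$ circles contain at least $n^{1/2}$ points. Deleting their centres keeps $\Omega(n)$ points and gives $\Delta_{3,4}\ll n^{1/2}$.

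\textbf{Deleting bad pairs is too costly.} You cannot delete a point from each high-codegree pair and keep $\Omega(n)$ vertices. For a threshold $n^{4/3-\epsilon}$, Lemma~\ref{lem:richd} together with the unit distance bound only limits the pairs with such a popular distance to $O(n^{3/2+11\epsilon/2}\log n)$, far more than $n$. Moreover, Li--Postle is stated with maximum codegrees, and unlike degrees these cannot be averaged away by discarding a few vertices.

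\textbf{The missing idea: change the hypergraph, not the point set.} Turn every pair $\{p,q\}$ with $r_{P'}(\|p-q\|)>n^{4/3-1/99}$ (in the pruned set $P'$) into a $2$-edge, and build $4$-edges only from the remaining distances. There are only $O(n^{2/9}\log n)$ popular distances, so after pruning each vertex lies in $n^{5/9+o(1)}$ such $2$-edges, far below $\overline{M}\approx n^{2/3}$. These $2$-edges therefore do not affect the final bound, and $\Delta_{K_3}$ is at most the square of the $2$-degree, well below $\overline{M}^2/f$. The pairing $\|p-q\|=\|s-t\|$ then contributes at most $n^{4/3-1/99}$ to $\Delta_{2,4}$.

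\textbf{The other pairing needs isosceles counts.} The pairing $\|p-s\|=\|q-t\|$ is not a single-distance incidence count as you suggest. Without pruning it can be of order $n^2$ (take equal circles about $p$ and $q$, each holding $n/2$ points), and even after rich-circle pruning the trivial bound is $n^{3/2}$. The paper bounds $\sum_d|C_p(d)\cap P'|\,|C_q(d)\cap P'|$ by Cauchy--Schwarz in terms of the isosceles triangles with apex $p$ and with apex $q$. Before that, it prunes so that each point lies in $n^{11/9+o(1)}$ isosceles triangles.

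This step needs a total isosceles count $O(n^{7/3-\delta})$ with $\delta>0$. The classical $O(n^{7/3})$ would only allow $f=(\log n)^{O(1)}$, which loses the logarithmic gain. Your assertion of $O(n^{2+\epsilon})$ isosceles triangles is an open problem; the best known bound is $O(n^{2.137})$ (Pach--Tardos).

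\textbf{The $3$-edges need the same device.} If you keep $3$-edges, $\Delta_{2,3}$ counts points on the bisector $b(p,q)$, which can number $\Theta(n)$. So pairs with rich bisectors must also be made $2$-edges.
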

Several variants of this problem have been studied, in particular the case where $P$ is an integer grid, although it seems to be difficult to make significant use of this extra information. Lefmann and Thiele~\cite{LeTh95} proved that there exists a distance Sidon subset of $[n] \times [n]$ with cardinality $\Omega(n^{2/3})$. Since $|[n] \times [n]|=n^2$, Theorem~\ref{thm:main} gives the same order of magnitude for arbitrary finite planar point sets. A recent blog post of Pohoata and Sheffer~\cite{PohoataSheffer2026} shows that the largest distance Sidon set in $[n] \times [n]$ has cardinality $O \left (n \exp\left( -c \frac{\log n}{\log\log n} \right ) \right )$. See also Conlon, Fox, Gasarch, Harris, Ulrich, and Zbarsky~\cite{conlon2015distinct} for related results concerning distinct volumes in higher dimensions.

\subsection*{Rhombus-free sets in the grid.}

Four points \(a,b,c,d \in \mathbb{R}^2\) form a rhombus if they are the vertices of a parallelogram with all side lengths equal. Equivalently, if in some cyclic ordering
\[
\|a-b\|=\|b-c\|=\|c-d\|=\|d-a\|.
\]
A set $P\subseteq \mathbb{R}^d$ is called \emph{rhombus-free} if it does not contain four points forming a rhombus. J\'anosik, N\'ador, Nagy and Simon~\cite{JanosikNadorNagySimon2026} asked what the maximum size of a rhombus-free subset of the integer grid $[n]^2$ is. They~\cite{JanosikNadorNagySimon2026} proved that there exists a rhombus-free set $S\subseteq [n]^2$ of size
\[
|S| \gg \frac{n^{4/3}}{(\log n)^{1/3}}.
\]
Their argument uses the deletion method together with arithmetic properties of Sidon sets. This bound also follows from a deletion method argument. Indeed, the number of rhombi in $[n]^2$ is $O(n^4\log n)$, since every rhombus contains four isosceles triangles, every isosceles triangle can be completed to at most one rhombus and the number of isosceles triangles in the grid is known to be $O(n^4\log n)$; see for example the proof of Proposition A.1. in \cite{Wagner} for a reference, or Lemma~\ref{lem:rhombicount} for general point sets. A standard random selection argument with probability $p=Cn^{-2/3}\log^{-1/3} n$ then yields a subset of size $\Omega(n^{4/3}(\log n)^{-1/3})$ that is rhombus-free.

Here we improve this bound and prove the following.

\begin{theorem}
\label{thm:rhomingrid}
There exists a rhombus-free set $S\subseteq [n]^2$ of size $|S| \gg n^{4/3}$.
\end{theorem}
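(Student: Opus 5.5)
We prove Theorem~\ref{thm:rhomingrid} by applying the Li--Postle independence number theorem for hypergraphs (mentioned in the abstract) to the $4$-uniform hypergraph $\mathcal H$ on vertex set $[n]^2$ whose edges are the rhombi. Such results say roughly the following. Let $\mathcal H$ be a $k$-uniform hypergraph on $N$ vertices with maximum degree at most $\Delta$, and suppose the codegrees are small: for each $2\le j<k$, every $j$-set of vertices lies in at most about $\Delta^{(k-j)/(k-1)-\varepsilon}$ edges. Then
\[
\alpha(\mathcal H)\gg_k N\left(\frac{\log \Delta}{\Delta}\right)^{1/(k-1)}.
\]
This is better by a logarithmic factor than the deletion bound $N\Delta^{-1/(k-1)}$. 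The logarithmic gain is exactly what should cancel the $\log n$ loss in the rhombus count.

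\textbf{Step 1: regularise degrees.} In $[n]^2$ the number of rhombi is $O(n^4\log n)$, so the average vertex degree is $\Delta\approx n^2\log n$. Individual points may be more or less degenerate, so first pass to a large subset where degrees are controlled. Count, for each point, the rhombi through it. A rhombus with vertex $a$ is determined by two vectors $u,v$ with $\|u\|=\|v\|$, giving vertices $a,a+u,a+v,a+u+v$. The number of such pairs inside $[n]^2$ is $\sum_{r} r_2(r)^2$ over $r\le 2n^2$, which is $O(n^2\log n)$ uniformly in $a$. So we may take $\Delta=Cn^2\log n$ for every vertex, with no cleaning needed; otherwise we would delete the points of degree more than $K$ times the average by Markov's inequality.

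\textbf{Step 2: codegree bounds.}
\begin{itemize}
\item[$j=3$:] Three vertices of a rhombus determine the fourth up to a bounded number of choices, since the fourth vertex is fixed once we know which of the three is opposite which. So the $3$-codegree is $O(1)$, comfortably below $\Delta^{1/3-\varepsilon}\approx n^{2/3-\varepsilon}$.
\item[$j=2$:] For a pair $\{a,b\}$ we must bound the rhombi containing both. If $ab$ is a side, the other two vertices are $a+w$ and $b+w$ with $\|w\|=\|b-a\|$, giving $r_2(\|b-a\|^2)=n^{o(1)}$ options. If $ab$ is a diagonal, the other diagonal is a lattice vector perpendicular to $b-a$ through the midpoint, giving $O(n)$ options. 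The $O(n)$ is a problem: the requirement is $\Delta^{2/3-\varepsilon}\approx n^{4/3-\varepsilon}$, and $n\le n^{4/3-\varepsilon}$ holds, so we are fine.
\end{itemize}
I expect this $2$-codegree diagonal case to be the main point to check carefully. It needs the exact hypothesis form in Li--Postle, since some versions require the $j$-codegree to be at most $\Delta^{(k-j)/(k-1)}/\log^{c}\Delta$ or similar. With the polynomial slack $n$ versus $n^{4/3}$ this should be harmless.

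\textbf{Step 3: conclude.} With $N=n^2$, $k=4$ and $\Delta\approx n^2\log n$, we get
\[
\alpha\gg n^2\left(\frac{\log n}{n^2\log n}\right)^{1/3}=n^{4/3}.
\]
If the degree bound in Step 1 is not uniform, for example near the boundary of the grid, restrict to the points of degree $O(n^2\log n)$. If some points have larger degree, use that the degree sum is $O(n^4\log n)$ to keep at least $n^2/2$ vertices, and work with the induced subhypergraph, whose maximum degree and codegrees only decrease. Either way the independent set obtained is a rhombus-free set $S\subseteq[n]^2$ with $|S|\gg n^{4/3}$.
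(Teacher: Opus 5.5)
Your proof is correct, but it takes a different, grid-specific route from the paper's. The paper deduces Theorem~\ref{thm:rhomingrid} from the more general Theorem~\ref{thm:rhombigen}, which covers any $N$-point set with no line containing more than $N^{2/3-c}$ points. It proceeds in three steps:
\begin{enumerate}
\item It bounds only the \emph{average} degree, via the general rhombus count of Lemma~\ref{lem:rhombicount}, which is a Szemer\'edi--Trotter argument.
\item It controls pairs that are sides by first passing to a half-size subset $Q$ in which no circle centred at a point of $Q$ contains more than $N^{1/2}$ points of $Q$ (Corollary~\ref{cor:norichcircles}, via the Marcus--Tardos circle incidence bound).
\item It applies the average-degree version of Li--Postle, Corollary~\ref{corl:ind}, with $f=N^c$.
\end{enumerate}

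You instead use the arithmetic of $\mathbb Z^2$. The classical mean-square estimate $\sum_{m\le 2n^2} r_2(m)^2\ll n^2\log n$ gives a \emph{uniform} maximum-degree bound $O(n^2\log n)$ at every grid point. For pairs that are sides, the divisor bound $r_2(m)\le 4d(m)=n^{o(1)}$ suffices; even the trivial bound of $O(n)$ lattice points on a circle would do. So no cleaning step is needed, and the maximum-degree form for uniform hypergraphs (Theorem~\ref{thm:LiPostle}, or Cooper--Mubayi) applies directly to all of $[n]^2$ with, say, $f=n^{1/4}$. The diagonal case is handled identically in both proofs, since the perpendicular bisector carries at most $n=N^{1/2}$ grid points.

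Your version is shorter and self-contained for the grid, replacing incidence theorems with a classical estimate for $r_2$. The paper's version buys generality: it gives $\gg N^{2/3}$ for any planar set avoiding $N^{2/3-c}$-rich lines, where no arithmetic structure is available and uniform per-vertex degree bounds are not known.
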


Both problems can be phrased as independence number problems in suitable auxiliary hypergraphs. In both proofs, we use tools from combinatorial geometry to establish combinatorial properties of these auxiliary hypergraphs, which in turn yield improved lower bounds on their independence numbers. For the rhombus-free problem, we work with the canonical auxiliary $4$-graph whose edges are rhombi. For the distance Sidon problem, however, we modify the canonical hypergraph to a rank-$4$ hypergraph and bound the relevant combinatorial parameters there.

The paper is organized as follows. In Section~\ref{sec:tools}, we collect the tools used in the proofs, in particular results on independence numbers of hypergraphs and incidence geometry. In Section~\ref{sec:distance}, we prove our main result, Theorem~\ref{thm:main}, on distance Sidon subsets. Finally, in Section~\ref{sec:rhombi}, we prove Theorem~\ref{thm:rhomingrid} on rhombus-free subsets of the grid.

\section{Tools}
\label{sec:tools}

Throughout this paper we will use several results from graph theory and discrete geometry. In this section, we collect all of the external results that will be used.

\subsection{Independence number in hypergraphs}

Let \( \mathcal{H} \) be a rank-\( k \) hypergraph, i.e. a hypergraph where every edge has size at most $k$. For integers \( 2 \leq \ell \leq k \) and a vertex set 
\( S \) with \( 1 \leq |S| < \ell \), we define \( \deg_\ell(S,\mathcal{H}) \) to be the number of edges 
of size \( \ell \) containing \( S \).
The \emph{maximum \( \ell \)-degree} of \( \mathcal{H} \), denoted by \( \Delta_\ell(\mathcal{H}) \), is
\[
\Delta_\ell(\mathcal{H}):= \max_{v \in V(\mathcal{H})} \deg_\ell(\{v\}, \mathcal{H}).
\]
Define
\[
M(\mathcal{H}):
=
\max_{2 \leq \ell \leq k}
\Delta_\ell(\mathcal{H})^{1/(\ell - 1)}.
\]
A standard deletion method argument gives the following lower bound on the independence number. 
\begin{equation*}
\alpha(\mathcal{H})
=
\Omega\!\left(
\frac{|V(\mathcal{H})|}{M(\mathcal{H})}
\right).
\end{equation*}
The \emph{maximum \( (s,\ell) \)-codegree} of \( \mathcal{H} \), denoted by \( \Delta_{s,\ell}(\mathcal{H}) \), is
\[
\Delta_{s,\ell}(\mathcal{H}) := 
\max_{\substack{S \subseteq V(\mathcal{H}) \\ |S| = s}} 
\deg_\ell(S, \mathcal{H}).
\]
Furthermore, let $\Delta_{K_3}(\{v\},\mathcal{H})$ denote the number of copies of
$K_3$ containing $v$ in the graph consisting of the $2$-edges of \(\mathcal H\), and let 
$$\Delta_{K_3}(\mathcal{H}) =\max_{v\in V(\mathcal{H})}\Delta_{K_3}(\{v\},\mathcal{H}).$$

\begin{theorem}[Li, Postle~\cite{LiPostle}]
\label{thm:LiPostle}
Fix \( k \geq 3 \). Let $\mathcal{H}$ be a rank-$k$ hypergraph with maximum $\ell$-degree at most $\Delta_\ell$ for each $2\leq \ell \leq k$. Set $M=\max_{2 \leq \ell \leq k}
\Delta_\ell^{1/(\ell - 1)}$.
Suppose that for all integers \( 2 \leq s < \ell \leq k \), we have $\Delta_{s, \ell}(\mathcal{H}) \leq M^{\ell - s} / f$ and
$\Delta_{K_3}(\mathcal{H}) \leq M^2 / f$. Then,
\[
\chi(\mathcal{H}) \ll 
\max_{2 \leq \ell \leq k} 
\left\{
\left( \frac{\Delta_\ell}{\log f} \right)^{\frac{1}{\ell - 1}}
\right\}.
\]
\end{theorem}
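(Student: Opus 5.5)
The natural route is a semi-random (nibble) colouring procedure in the spirit of Johansson's theorem for triangle-free graphs and of Frieze–Mubayi and Cooper–Mubayi for locally sparse hypergraphs. By monotonicity we may assume $\Delta_\ell\le M^{\ell-1}$ for every $\ell$. We fix the palette size
\[
C = K\max_{2\le \ell\le k}\Bigl(\frac{\Delta_\ell}{\log f}\Bigr)^{1/(\ell-1)}
\]
for a large constant $K=K(k)$, give every vertex the list $L(v)=[C]$, and aim for a proper colouring, meaning no edge monochromatic. The codegree hypotheses $\Delta_{s,\ell}\le M^{\ell-s}/f$ and $\Delta_{K_3}\le M^2/f$ play the role of local sparsity, as in Alon–Krivelevich–Sudakov's version of Johansson's theorem. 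They are what should buy the $\log f$ saving over the trivial (greedy / local lemma) bound of order $M$.

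\textbf{The random rounds.} In each round every uncoloured vertex is activated independently with a small probability $\eta$ and, if activated, picks a uniformly random colour from its current list. A chosen colour $c$ at $v$ is retained unless it completes a monochromatic edge, or threatens to complete one, together with other already coloured or simultaneously activated vertices. After this, each colour $c$ is deleted from $L(u)$ whenever some edge $e\ni u$ has all its other vertices coloured $c$. Equalising coin flips keep each $|L(v)|$ and each degree parameter on a deterministic trajectory.

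For each uncoloured $v$, colour $c\in L(v)$, and pair $1\le j<\ell\le k$, we track the quantity $d_{\ell,j}(v,c)$. This is the number of $\ell$-edges containing $v$ whose $\ell-1-j$ other coloured vertices all have colour $c$ and which still have $j$ further uncoloured vertices with $c$ in their lists. In other words, these are edges that have effectively shrunk to size $j+1$ with respect to colour $c$.

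\textbf{Expected changes and concentration.} We compute the one-round change in $\mathbb E|L(v)|$ and $\mathbb E\, d_{\ell,j}(v,c)$. The aim is to show these follow a system of differential-equation-type recursions under which, after $T\approx \eta^{-1}\log f$ rounds, the ratio of every surviving degree (normalised by $|L(v)|^{j}$) to the list size drops by a factor of roughly $f^{-\Omega(1)}$. Meanwhile $|L(v)|$ only shrinks by a factor of $f^{-o(1)}$. This is the mechanism behind the $\log f$ gain: edges lose vertices and lists shrink much more slowly than degrees.

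Concentration of each tracked quantity around its expectation would come from Talagrand's inequality or the Kim–Vu polynomial inequality. This is exactly where the codegree bounds enter: the effect of recolouring one vertex on $d_{\ell,j}(v,c)$ is controlled by $\Delta_{s,\ell}$. The triangle bound controls the correlation between the events "$u$ keeps colour $c$" for the graph-neighbours $u$ of $v$, which is needed so that list losses at $v$ are essentially independent.

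\textbf{Finishing step and main obstacle.} Once every $d_{\ell,j}(v,c)\le (|L(v)|/(10k))^{j}$-type bounds hold, the Lovász Local Lemma finishes the colouring from the remaining lists. Each bad event is that an edge becomes monochromatic, and a random choice from the lists makes each such event rare enough against the dependency degree. I expect the main obstacle to be the bookkeeping in the middle step. We must simultaneously control all the mixed degree parameters $d_{\ell,j}$ for every $2\le \ell\le k$: an $\ell$-edge partially coloured in $c$ migrates into the "smaller edge" counts. We must also verify that the error terms coming from the codegrees, of size at most $M^{\ell-s}/f$, stay negligible throughout all $T$ rounds. I would first try to handle this by defining the trajectories recursively in $j$ and proving the bounds by induction on $j$ within each round.
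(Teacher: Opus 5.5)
The paper does not prove this theorem. It is quoted from Li and Postle and used only as a black box, to derive Corollary~\ref{corl:ind}, so your proposal has to stand on its own, and as written it has a genuine gap.

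What you give is a plan for a semi-random colouring in the style of Johansson, Frieze--Mubayi and Cooper--Mubayi. The step you call the ``main obstacle'' is the whole content of the theorem, and none of it is carried out:
\begin{enumerate}
\item You never write down the one-round recursions for $|L(v)|$ and for upper bounds on the $d_{\ell,j}(v,c)$, including the inflow from level $j+1$ to level $j$ each time a vertex of an edge is coloured $c$.
\item You never solve them to show that, with $C=K\max_\ell(\Delta_\ell/\log f)^{1/(\ell-1)}$, the normalised degrees $d_{\ell,j}/|L|^{j}$ fall below $(10k)^{-j}$ while the lists stay large.
\item You never check that the hypotheses are strong enough for this. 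The bounds $\Delta_{s,\ell}(\mathcal H)\le M^{\ell-s}/f$ and $\Delta_{K_3}(\mathcal H)\le M^2/f$ are needed already in the expectation computation, since they make the retention and list-loss events for the other vertices of the edges at $v$ nearly independent. They are static bounds, whereas the main terms $d_{\ell,j}$ shrink every round. Showing that errors of initial relative size $f^{-\Omega(1)}$ stay negligible along the whole trajectory is exactly the unproved part.
\end{enumerate}
Your quantitative heuristics also need adjusting. The normalised degrees start at most at $\log f/K^{\ell-1}$, so pushing them down by a factor of order $\log f$ suffices. Going further, to a drop of $f^{-\Omega(1)}$, only makes the static error terms harder to control. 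The lists then shrink by a fixed small power of $f$ (about $f^{-1/K}$ in the graph case), not by $f^{-o(1)}$.

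Second, the statement as transcribed is false for large $f$, and your argument never uses any restriction on $f$. Take $k=3$ and let $\mathcal H$ be a triangle-free graph of maximum degree $\Delta$ with $\chi(\mathcal H)\gg\Delta/\log\Delta$ and no $3$-edges. Then $M=\Delta$ and $\Delta_{2,3}(\mathcal H)=\Delta_{K_3}(\mathcal H)=0$, so the hypotheses hold for every $f$, and $f=e^{\Delta}$ would give $\chi(\mathcal H)=O(1)$. So a bound like $f\le M^{O(1)}$ must be part of the statement; compare the condition $f\le\Delta^2+1$ in the Alon--Krivelevich--Sudakov theorem. This is harmless for the paper, where $f$ is $n^{1/100}$ or $n^{c}$ and $M$ is a larger power of $n$.

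In your proof, this restriction is precisely what guarantees that the palette $C$ and the final lists, of size about $Cf^{-\Theta(1/K)}$, are polynomially large in $M$. The per-round concentration (Talagrand or Kim--Vu) and local-lemma steps need that. With $f=e^{\Delta}$ the palette has constant size and the nibble has nothing to work with.

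A cleaner route to general $f$ is to first split $V(\mathcal H)$ at random into $m$ parts with disjoint palettes. Inside each part the parameters $\Delta_\ell$, $\Delta_{s,\ell}$ and $\Delta_{K_3}$ drop by factors of about $m^{\ell-1}$, $m^{\ell-s}$ and $m^2$. So the hypotheses persist with the same $f$ and with $M$ replaced by $M/m$. Choosing $M/m\approx f^{1/\epsilon}$ reduces, after the usual concentration checks, to the polynomially sparse case $f\ge M^{\epsilon}$, where the nibble only has to gain a factor $\log M$.
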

See also \cite{cooper2016coloring} for a version of Theorem~\ref{thm:LiPostle} for uniform hypergraphs, and \cite{clemen2026applications} for an application of this theorem in a geometric setting.
In many applications arising in combinatorial geometry, when the problem is encoded using an auxiliary hypergraph, one often encounters a small number of vertices with unusually large $\ell$-degrees. In such settings, controlling maximum degrees is inconvenient, while average degrees remain well behaved. For \( 2 \leq \ell \leq k \), the \emph{average \( \ell \)-degree} of \( \mathcal{H} \) is defined as
\[
\overline{\Delta}_\ell(\mathcal{H}):
=
\frac{1}{|V(\mathcal{H})|}
\sum_{v \in V(\mathcal{H})} \deg_\ell(\{v\}, \mathcal{H}).
\]
Set
\[
\overline{M}(\mathcal H)
:=
\max_{2 \leq \ell \leq k}
\overline{\Delta}_\ell(\mathcal H)^{1/(\ell - 1)}.
\]
Again, simple probabilistic deletion arguments give us
\begin{equation*}
\alpha(\mathcal{H})
=
\Omega\!\left(
\frac{|V(\mathcal{H})|}{\overline M(\mathcal{H})}
\right).
\end{equation*}
It is useful to formulate a variant of Theorem~\ref{thm:LiPostle} in terms of average degrees, and we state such an average-degree version below. This comes at the cost that the conclusion only gives an independence-number
bound, rather than a chromatic-number bound; this is sufficient for our
applications.

\begin{cor}
\label{corl:ind}
Fix \( k \geq 3 \). Let $\mathcal{H}$ be a rank-$k$ hypergraph with average $\ell$-degree at most $\overline{\Delta}_\ell$ for each $2\leq \ell \leq k$. Set $\overline{M}:=\max_{2 \leq \ell \leq k}
\overline{\Delta}_\ell^{1/(\ell - 1)}$.
Suppose that for all integers \( 2 \leq s < \ell \leq k \), $\Delta_{s, \ell}(\mathcal{H}) \leq \overline{M}^{\ell - s} / f$ and $\Delta_{K_3}(\mathcal{H}) \leq \overline{M}^2 / f$.
Then,
\[
\alpha(\mathcal{H})
\gg_k
|V(\mathcal{H})| \cdot
\min_{2 \leq \ell \leq k}
\left\{
\left(
\frac{\log f}{\overline{\Delta}_\ell}
\right)^{1/(\ell - 1)}
\right\}.
\]
\end{cor}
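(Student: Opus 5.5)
The plan is to reduce Corollary~\ref{corl:ind} to Theorem~\ref{thm:LiPostle} by passing to an induced subhypergraph in which all maximum degrees are controlled by the average degrees. Set $n=|V(\mathcal H)|$. By Markov's inequality, for each $2\le \ell\le k$ at most $n/(2k)$ vertices $v$ satisfy $\deg_\ell(\{v\},\mathcal H) > 2k\,\overline{\Delta}_\ell$. Delete all such vertices, over all $\ell$. This leaves a set $V'$ with $|V'|\ge n/2$.

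Let $\mathcal H'=\mathcal H[V']$ be the induced subhypergraph, consisting of the edges of $\mathcal H$ entirely contained in $V'$. Passing to a subhypergraph can only decrease degrees, codegrees and triangle counts. Hence $\Delta_\ell(\mathcal H')\le 2k\,\overline{\Delta}_\ell =: \Delta_\ell$ for every $\ell$, and also $\Delta_{s,\ell}(\mathcal H')\le \Delta_{s,\ell}(\mathcal H)$ and $\Delta_{K_3}(\mathcal H')\le \Delta_{K_3}(\mathcal H)$.

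Put $M=\max_\ell \Delta_\ell^{1/(\ell-1)}$. Then $\overline M\le M\le 2k\,\overline M$, and since $\ell-s\ge 1$ we get $\overline M^{\ell-s}\le M^{\ell-s}$. So the codegree hypotheses $\Delta_{s,\ell}(\mathcal H')\le M^{\ell-s}/f$ and $\Delta_{K_3}(\mathcal H')\le M^2/f$ follow directly from the assumed bounds with $\overline M$. (If we prefer the maximum degrees to equal the corresponding parameter exactly, the inequalities still go the right way, because enlarging $M$ only weakens the hypotheses.) One minor point: Theorem~\ref{thm:LiPostle} is stated for $M$ defined from the degree bounds $\Delta_\ell$, which is exactly what we have here.

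Applying Theorem~\ref{thm:LiPostle} to $\mathcal H'$ gives
\[
\chi(\mathcal H') \ll \max_\ell \left(\frac{2k\,\overline\Delta_\ell}{\log f}\right)^{1/(\ell-1)} \ll_k \max_\ell \left(\frac{\overline\Delta_\ell}{\log f}\right)^{1/(\ell-1)}.
\]
A proper coloring has a color class of size at least $|V'|/\chi(\mathcal H')$, and any color class is independent in $\mathcal H'$. Because $\mathcal H'$ is induced, that color class is also independent in $\mathcal H$. Therefore
\[
\alpha(\mathcal H)\ge \frac{n/2}{\chi(\mathcal H')}\gg_k n\min_\ell\left(\frac{\log f}{\overline\Delta_\ell}\right)^{1/(\ell-1)}.
\]

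The only delicate point is that the bound from Theorem~\ref{thm:LiPostle} must stay at least $1$. Otherwise the conclusion fails in degenerate cases, for example when some $\overline\Delta_\ell=0$, or when $f$ is so large that $\log f>\overline\Delta_\ell$. I would handle this by replacing $\overline\Delta_\ell$ with $\max(\overline\Delta_\ell,1)$, which only weakens the hypotheses, and by noting that $\chi\ge1$ always. Beyond that, the argument is routine bookkeeping of the constants depending on $k$.
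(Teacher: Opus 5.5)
Your proposal is correct and follows the paper's proof essentially step for step. You delete the vertices whose $\ell$-degree exceeds a constant multiple of the average (the paper uses the threshold $2(k-1)\overline{\Delta}_\ell(\mathcal{H})$, you use $2k\,\overline{\Delta}_\ell$), pass to the induced subhypergraph on the at least $|V(\mathcal{H})|/2$ surviving vertices, verify the hypotheses of Theorem~\ref{thm:LiPostle} using monotonicity of codegrees and triangle counts together with $\overline{M}\le M$, and take a largest colour class. Your closing caveat about degenerate parameter ranges is an addition the paper leaves implicit; an example is $\log f>\overline{\Delta}_\ell$ for all $\ell$, where the stated bound can exceed $|V(\mathcal{H})|$. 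It is harmless in the paper's applications, where each $\overline{\Delta}_\ell$ is polynomially larger than $\log f$.
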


\begin{proof}[Proof of Corollary~\ref{corl:ind}]
 Let $\mathcal{H}$ be a rank-$k$ hypergraph with average $\ell$-degree at most $\overline{\Delta}_\ell$ for each $2\leq \ell \leq k$, i.e. $\overline\Delta_\ell(\mathcal{H})\leq \overline{\Delta}_\ell$.  Set $\overline{M}:=\max_{2 \leq \ell \leq k}
\overline{\Delta}_\ell^{1/(\ell - 1)}$ and $V = V(\mathcal{H})$. 
For each $2 \le \ell \le k$, define
\[
V_\ell:=\left\{v \in V :\deg_\ell(\{v\}, \mathcal{H})>2(k-1)\, \overline{\Delta}_\ell(\mathcal{H})
\right\}.\]
By definition of average $\ell$-degree,
\[
|V|\, \overline{\Delta}_\ell(\mathcal{H})
>|V_\ell| \cdot 2(k-1)\,\overline{\Delta}_\ell(\mathcal{H}),\]
hence $|V_\ell| < \frac{|V|}{2(k-1)}$.
Now define
\[
U:
=
V
\setminus
\bigcup_{2 \le \ell \le k} V_\ell.
\]
Since there are exactly $k-1$ possible values of $\ell$,
\[
\left|
\bigcup_{2 \le \ell \le k} V_\ell
\right|\leq \sum_{\ell=2}^k|V_\ell|
<
(k-1)\frac{|V|}{2(k-1)}
=
\frac{|V|}{2}.
\]
Therefore $|U| \ge |V|/2$. Let $\mathcal{H}'=\mathcal{H}[U]$ be the induced subhypergraph of $\mathcal{H}$ on $U$. By construction $$\Delta_\ell(\mathcal{H}')
\le
2(k-1)\,\overline{\Delta}_\ell(\mathcal{H})\leq 2(k-1)\overline{\Delta}_\ell
\quad
\text{for all } 2 \le \ell \le k.$$
Deleting vertices cannot increase codegrees, so for all 
$2 \le s < \ell \le k$,
\begin{equation}
\label{eq:codegreeH'H}
\Delta_{s,\ell}(\mathcal{H}')
\le
\Delta_{s,\ell}(\mathcal{H})
\qquad \text{and} \qquad 
\Delta_{K_3}(\mathcal{H}')
\le
\Delta_{K_3}(\mathcal{H}).
\end{equation}
Define $\Delta_\ell:=2(k-1)\overline{\Delta}_\ell$ for $2\leq \ell \leq k$ and set $M=\max_{2 \leq \ell \leq k}\Delta_\ell^{1/(\ell-1)}$. Note that $\overline{M}\leq M$.
Therefore, using \eqref{eq:codegreeH'H} and the hypothesis of this corollary,
\begin{equation*}
\Delta_{s,\ell}(\mathcal{H}')\leq 
\Delta_{s,\ell}(\mathcal{H})
\le
\frac{\overline{M}^{\ell-s}}{f}\leq \frac{M^{\ell-s}}{f} 
\qquad \text{and} \qquad \Delta_{K_3}(\mathcal{H}')
\leq 
\Delta_{K_3}(\mathcal{H})
\le
\frac{\overline{M}^2}{f}\leq \frac{M^2}{f}.
\end{equation*}
Therefore, the hypotheses of Theorem~\ref{thm:LiPostle} hold for $\mathcal{H}'$. By applying Theorem~\ref{thm:LiPostle}, we obtain
\[
\chi(\mathcal{H}')
\ll_k
\max_{2 \le \ell \le k}
\left\{
\left(
\frac{\Delta_\ell}{\log f}
\right)^{1/(\ell - 1)}
\right\}.
\]
Since every proper colouring of \(\mathcal H'\) has a colour class of size at least
\(|U|/\chi(\mathcal H')\), this implies
\[
\alpha(\mathcal{H}')
\gg_k
|U|
\min_{2 \le \ell \le k}
\left\{
\left(
\frac{\log f}{\Delta_\ell}
\right)^{1/(\ell - 1)}
\right\}
.
\]
Since $|U| \ge |V|/2$, $\Delta_\ell=\Theta(\overline{\Delta}_\ell)$ and $\alpha(\mathcal{H}) \ge \alpha(\mathcal{H}')$,
the desired lower bound on $\alpha(\mathcal{H})$ follows.
\end{proof}

\subsection{Tools from discrete geometry}

A key tool for the proof of Theorem \ref{thm:main} is a celebrated result of Guth and Katz~\cite{GuKa15}, which solved the Erd\H{o}s distinct distances problem up to logarithmic factors. In particular, we need the following result from \cite{GuKa15} which bounds the distance energy. 

\begin{theorem}[Guth and Katz~\cite{GuKa15}] \label{thm:GK}
 For any $P \subset \mathbb R^2$,
 \[
| \{ (p,q,p',q') \in P^4 : \| p-q\|=\|p'-q' \| \}|\ll |P|^3 \log|P|.
 \]
\end{theorem}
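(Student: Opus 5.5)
The plan is to reproduce the Elekes--Sharir--Guth--Katz argument: convert distance quadruples into incidences between lines in $\mathbb R^3$, then apply the Guth--Katz bounds on points lying on many lines. Write $N=|P|$.

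\textbf{Degenerate quadruples and rigid motions.} Quadruples with $p=q$ force $p'=q'$, so there are only $N^2$ of them and they can be discarded. For a quadruple with $p\neq q$ and $\|p-q\|=\|p'-q'\|$, there is a unique orientation-preserving rigid motion $g$ with $g(p)=p'$ and $g(q)=q'$. Conversely, a rigid motion $g$ with $|g(P)\cap P|=m$ produces $m^2$ ordered quadruples. Hence
\[
Q(P)\le N^2+\sum_g |g(P)\cap P|^2 \ll N^2+\sum_{k\ge 2} k\, r_k,
\]
where $r_k$ is the number of rigid motions $g$ with $|g(P)\cap P|\ge k$. It therefore suffices to prove $r_k\ll N^3/k^2$ for $2\le k\le N$; summing over $k$ then gives $\sum_k N^3/k\ll N^3\log N$.

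\textbf{Translations and rotations.} Translations are easy to handle separately. Their total contribution is $\sum_v r_P(v)^2\le N\cdot\sum_v r_P(v)=N^3$, where $r_P(v)$ counts pairs in $P$ with difference $v$.

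For genuine rotations I would use the Elekes--Sharir parametrization. A rotation with centre $c$ and angle $\theta$ is sent to the point $(c,\cot(\theta/2))\in\mathbb R^3$. In these coordinates, the set of rotations mapping $p$ to $p'$ is a line $L_{pp'}$. This gives $N^2$ lines, and a rotation $g$ with $|g(P)\cap P|\ge k$ is a point lying on at least $k$ of them. One then checks the geometric non-degeneracy of this family: no two lines are parallel, at most $N$ of them lie in a common plane, and at most $O(N)$ lie in a common regulus. This follows because lines $L_{pp'}$ with the same $p$ are pairwise skew, and similar explicit computations cover the other cases.

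\textbf{Incidence bounds (main obstacle).} The core of the proof is the Guth--Katz incidence theorem. Given $L$ lines in $\mathbb R^3$ with at most $\sqrt L$ in any plane or regulus, the number of points lying on at least $k$ lines is $\ll L^{3/2}/k^2$ for $3\le k\le\sqrt L$, and $\ll L^{3/2}$ for $k=2$. With $L=N^2$ this gives exactly $r_k\ll N^3/k^2$.

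\begin{itemize}
\item \emph{Case $k\ge 3$.} I would use polynomial partitioning of degree $D\approx \sqrt{L}/k$. In the cells, apply Szemer\'edi--Trotter-type bounds and induction. On the zero set $Z(f)$, use that a point with at least $3$ lines inside $Z(f)$ is either critical or flat. Critical and flat points are controlled via the polynomials $\nabla f$ and the second fundamental form, since a line is contained in a degree-$D$ surface or meets it in at most $D$ points.
\item \emph{Case $k=2$.} This is the hardest step. Find a polynomial of degree $O(\sqrt L)$ vanishing on all the lines, then invoke the Cayley--Salmon/Monge flecnode theorem: a surface of degree $D$ containing more than $11D^2-24D$ lines must contain a ruled component. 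Then analyse the ruled surfaces. The plane/regulus hypothesis limits intersections in doubly ruled components, and singly ruled components contribute only $O(L)$ intersection points beyond those on exceptional lines.
\end{itemize}

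Combining the incidence bounds with the translation estimate in the dyadic sum gives the stated bound $\ll |P|^3\log|P|$.
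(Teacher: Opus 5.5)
The paper does not prove this statement; it quotes it from Guth and Katz as a black box. Your plan is exactly their argument: the Elekes--Sharir reduction to the $N^2$ lines $L_{pq}\subset\mathbb R^3$, translations treated separately, the bound $r_k\ll N^3/k^2$ from the Guth--Katz incidence theorems for $k\ge 3$ and $k=2$, then $\sum_k k\,r_k\ll N^3\log N$. So the route is the right one, but three points in the sketch need repair.

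\textbf{Parallel lines.} The claim that no two lines $L_{pq}$ are parallel is false. The direction of $L_{pq}$ is $((q-p)^{\perp}/2,1)$, which depends only on $q-p$. Hence $L_{pq}\parallel L_{p'q'}$ whenever $q-p=q'-p'$; for instance, the $N$ vertical lines $L_{pp}$ are all parallel. This is harmless: parallel pairs correspond exactly to translations, which you handle separately, and the incidence theorems need no non-parallelism hypothesis. Just drop the claim.

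\textbf{The regulus bound.} This cannot come from the pairwise skewness of $\{L_{pq}\}_{q\in P}$ for fixed $p$, because a single ruling of a regulus consists of pairwise skew lines. Indeed, for fixed $p$, all lines $L_{pq}$ with $q$ on a fixed circle lie in one ruling of a hyperboloid of one sheet. Skewness gives the plane bound, but this step needs a genuine argument. Guth--Katz supply one. Alternatively, the map $(p,q)\mapsto\big(\tfrac{p+q}{2},\tfrac{(q-p)^{\perp}}{2}\big)$, which records where $L_{pq}$ meets $\{z=0\}$ and its slope, is a linear bijection onto the standard chart of non-horizontal lines. In that chart a ruling is cut out by linear equations in Pl\"ucker coordinates, so it lies on a conic in an affine plane $W\subset\mathbb R^4$. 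A short case analysis then gives $O(N)$: either $W$ projects injectively to one of the two factors $\mathbb R^2$, or $W=\lambda_1\times\lambda_2$ for lines $\lambda_i\subset\mathbb R^2$.

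\textbf{The $k=2$ case.} Your outline says nothing about intersections among lines lying in the non-ruled part of $Z(f)$. Cayley--Salmon only bounds the number of such lines by $11D^2$. For $D$ of order $\sqrt L$ this is comparable to $L$, so it gives nothing by itself. An additional reduction is needed, for example pruning lines with few intersection points and using degree reduction so that $D\le c\sqrt L$ with $c$ small, followed by induction on $L$.
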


We will need some results from incidence geometry, including the classical Szmer\'{e}di-Trotter Theorem.

\begin{theorem}[Szemer\'{e}di-Trotter Theorem]
\label{ST}
For any finite $P \subset \mathbb R^2$ and any finite set $L$ of lines in $\mathbb R^2$,
\[
I(P,L) := | \{ (p,\ell) \in P \times L : p \in \ell \}| \ll |P|^{2/3}| L|^{2/3}   + |P| + |L|.
\]
\end{theorem}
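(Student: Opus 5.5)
We prove the Szemer\'edi--Trotter Theorem by Sz\'ekely's crossing-number argument. The plan is to encode the incidences as a drawing of a simple graph in the plane. We then compare a lower bound for the number of crossings, coming from the crossing lemma, with the trivial upper bound coming from the fact that two lines meet in at most one point.

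\emph{Construction of the graph.} First discard every line of $L$ that contains no point of $P$; this changes neither $I(P,L)$ nor increases $|L|$. Define a graph $G$ with vertex set $P$ as follows. For each $\ell \in L$ with $k_\ell := |\ell \cap P| \ge 1$, order the points of $\ell\cap P$ along $\ell$ and join consecutive points by the segment of $\ell$ between them. This gives $k_\ell - 1$ edges drawn on $\ell$.
\begin{itemize}[leftmargin=*]
\item The graph is simple: two points of $P$ determine a unique line, so no pair is joined twice.
\item The number of edges satisfies $e(G) = \sum_\ell (k_\ell - 1) = I(P,L) - |L|$.
\item Edges drawn on the same line do not cross, and two distinct lines meet in at most one point. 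Hence the drawing has at most $\binom{|L|}{2} < |L|^2$ crossings, so $\mathrm{cr}(G) < |L|^2$.
\end{itemize}

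\emph{The crossing lemma.} We need the following: if $e \ge 4v$, then $\mathrm{cr}(G) \ge e^3/(64 v^2)$. To prove it, start from Euler's formula. A simple planar graph satisfies $e \le 3v$, so deleting one edge per crossing gives $\mathrm{cr}(G) \ge e - 3v$. Next keep each vertex independently with probability $p = 4v/e \le 1$ and take expectations over the induced subdrawing:
\[
p^4\,\mathrm{cr}(G) \;\ge\; p^2 e - 3pv .
\]
This uses that each crossing survives with probability $p^4$ and each edge with probability $p^2$. Substituting $p = 4v/e$ gives $\mathrm{cr}(G) \ge e^3/(64v^2)$.

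\emph{Conclusion.} There are two cases.
\begin{itemize}[leftmargin=*]
\item If $I - |L| < 4|P|$, then $I \ll |P| + |L|$.
\item Otherwise the crossing lemma applies, and
\[
\frac{(I-|L|)^3}{64|P|^2} \;\le\; \mathrm{cr}(G) \;<\; |L|^2 .
\]
Hence $I - |L| \ll |P|^{2/3}|L|^{2/3}$.
\end{itemize}
In either case $I(P,L) \ll |P|^{2/3}|L|^{2/3} + |P| + |L|$.

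The only substantive step is the crossing lemma, and its probabilistic amplification is routine once Euler's bound is available. The points that need care are that $G$ is simple and that each crossing is counted by a distinct pair of lines. Both follow directly from the fact that two points determine a unique line.
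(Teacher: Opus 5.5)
The paper does not prove this statement. It quotes the Szemer\'edi--Trotter theorem as a classical black box and uses it only to count rich lines in the proof of Lemma~\ref{lem:rhombicount}, so there is no internal argument to compare against.

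Your proposal is Sz\'ekely's crossing-number proof, and it is correct:
\begin{itemize}
\item after discarding empty lines, the graph is simple with $e(G)=I(P,L)-|L|$;
\item no edge passes through a vertex in its interior, since every point of $P$ on a line of $L$ is a vertex of that line's path;
\item each pair of lines yields at most one crossing pair;
\item the amplification with $p=4v/e$ gives $\mathrm{cr}(G)\ge e^3/(64v^2)$, and hence the bound.
\end{itemize}

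The one step that deserves an extra sentence is the survival probability $p^4$. It requires the two edges in every crossing to have four distinct endpoints. This holds in a crossing-minimal drawing by the standard uncrossing argument, but you should say so.

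You can avoid the issue entirely by sampling your explicit segment drawing $D$ rather than working with $\mathrm{cr}(G)$. Write $X(D)$ for the number of crossing pairs in $D$. Two segments sharing an endpoint meet only there, so crossing pairs in $D$ are vertex-disjoint and $\mathbb{E}\,X(D_p)=p^4X(D)$. Combined with $X(D')\ge e(H)-3v(H)$ for every drawing $D'$ of a simple graph $H$, this gives $e^3/(64v^2)\le X(D)<|L|^2$ directly. Many lines meeting at one point cause no trouble, since you count pairs.

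Your route makes the paper self-contained. The same template (curves meeting in $O(1)$ points, bounded edge multiplicity) also proves the unit-distance bound of Theorem~\ref{thm:unit}, which the paper likewise imports. The citation buys brevity, and nothing in the paper depends on how the theorem is proved.
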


A well-known consequence of (a variant of) the Szemer\'{e}di-Trotter Theorem is the following bound \cite{SST83} for the maximum number of occurrences of a fixed distance, an infamous open problem in discrete geometry. See also \cite{OpenAI2026PlanarUnitDistanceBlog,Sawin2026UnitDistance} for the very recent exceptional breakthrough by OpenAI.

\begin{theorem} [Spencer, Szemer\'edi, Trotter \cite{SST83}]\label{thm:unit}
For any finite $P \subset \mathbb R^2$ and any $d \geq 0$,
\[
|\{(p,q) \in P \times P :  \|p-q\|=d \}| \ll |P|^{4/3}.
\]
\end{theorem}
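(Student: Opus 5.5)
For $d=0$ the only pairs are $(p,p)$, so the count is $|P|$, and we may assume $d>0$. Write $n=|P|$. Note that $\|p-q\|=d$ holds exactly when $q$ lies on the circle $C_p$ of radius $d$ centred at $p$. The quantity to bound is therefore the number of incidences $I$ between the point set $P$ and the family $\mathcal C=\{C_p : p\in P\}$ of $n$ congruent circles.

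The structural facts are these. Two distinct circles of radius $d$ meet in at most two points. Dually, two distinct points lie on at most two common circles of radius $d$, since the centres must be at distance $d$ from both points. This is the "pseudo-line with two degrees of freedom" situation, and it is what gives the $n^{4/3}$ exponent. Only Theorem~\ref{ST} for lines is stated earlier, and it does not apply verbatim to circles. I would therefore run Sz\'ekely's crossing-number argument directly on the circles, assuming the standard crossing lemma: a simple graph with $v$ vertices and $e\ge 4v$ edges has crossing number at least $e^3/(64v^2)$.

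Concretely, I would build a topological graph $G$ as follows.
\begin{itemize}
\item The vertex set is $P$.
\item For each circle $C_p$ containing $k_p\ge 3$ points of $P$, join consecutive points along $C_p$ by the arcs between them, giving $k_p$ edges.
\item Circles with $k_p\le 2$ contribute at most $2n$ incidences in total, which is negligible.
\end{itemize}
The edge count is then at least $I-2n$. Edges can repeat, since the same pair of points may be consecutive on two different circles. Because at most two circles of radius $d$ pass through a given pair, each pair is joined by at most two arcs. Deleting one copy of each repeated edge therefore leaves a simple graph with $e\ge (I-2n)/2$ edges.

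Two arcs cross only at an intersection point of their circles. There are at most $\binom{n}{2}$ pairs of circles, each meeting in at most $2$ points, so $\operatorname{cr}(G)\le n^2$. If $e<4n$ we are done, since then $I\ll n$. Otherwise the crossing lemma gives $e^3/(64n^2)\le n^2$, hence $e\ll n^{4/3}$ and $I\ll n^{4/3}+n\ll n^{4/3}$. Each incidence $q\in C_p$ corresponds to exactly one ordered pair $(p,q)$ with $\|p-q\|=d$, so this gives the theorem.

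The main obstacle is making the drawing legitimate for the crossing lemma. First, the multiplicity-two edges must be removed as above. Second, a crossing must be counted only where two arcs meet at an interior point; meetings at a shared endpoint in $P$ are vertices, not crossings, and counting them would only overcount, which is harmless for an upper bound. Third, tangencies, where two circles touch without crossing, are not crossings either, and including them again only enlarges the upper bound. Beyond these checks, the argument is the routine computation above.
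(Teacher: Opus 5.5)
Your proof is correct. It is Sz\'ekely's crossing-number argument, and the checks go through:
\begin{itemize}
\item circles with $k_p\ge 3$ give genuine cycles;
\item every point of $P$ on a used circle is a vertex of that cycle, so no arc passes through a vertex;
\item each pair of points is joined by at most two arcs;
\item arcs of distinct circles meet in their interiors at most $2\binom{n}{2}$ times in total.
\end{itemize}

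The paper gives no proof of this statement. It quotes it from \cite{SST83} as a consequence of a variant of the Szemer\'edi--Trotter theorem, namely the incidence bound $I(P,\mathcal C)\ll |P|^{2/3}|\mathcal C|^{2/3}+|P|+|\mathcal C|$ for a family $\mathcal C$ of congruent circles. This is applied to the $n$ circles of radius $d$ centred at the points of $P$. The Marcus--Tardos bound stated in the paper would not suffice here, since its middle term is $n^{15/11}\log^{2/11}n$. The original argument of Spencer, Szemer\'edi and Trotter is a cell-partitioning argument in the spirit of the original Szemer\'edi--Trotter proof.

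Your route replaces that machinery with the crossing lemma. It uses only two facts: congruent circles meet in at most two points, and two points lie on at most two congruent circles. It is shorter and self-contained modulo the crossing lemma. Run with $m$ points and $N$ circles, it gives the same general congruent-circle incidence bound, so nothing is lost.

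One detail should be stated explicitly. In your drawing, two arcs sharing an endpoint can still cross at the second intersection point of their circles, so the drawing is not one in which adjacent edges are disjoint. This is harmless: the crossing lemma bounds $\operatorname{cr}(G)$, the minimum over all drawings. Your drawing, after a small perturbation removing tangencies and splitting multiple crossings at a point into pairwise crossings, only supplies an upper bound for it.
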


We will use a variant of the Szemer\'{e}di-Trotter Theorem for point-circle incidences, due to Marcus and Tardos \cite{MaTa06}.

\begin{theorem}[Marcus and Tardos \cite{MaTa06}]\label{thm:circleincidences}
Let $P \subset \mathbb R^2$ be finite and let $ \mathcal C$ be a finite family of circles in $\mathbb R^2$. Then
\[
 I(P, \mathcal C) \ll |P|^{2/3}| \mathcal C|^{2/3} + |P|^{6/11}|\mathcal C|^{9/11} \log^{2/11} |P| + |P| + | \mathcal C|.
\]

\end{theorem}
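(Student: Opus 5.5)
This statement is quoted from \cite{MaTa06} and the paper uses it as a black box. If I were to reprove it, I would follow the pseudo-segment approach. Write $m=|P|$ and $n=|\mathcal C|$.

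\textbf{Step 1 (trivial bound).} Three distinct points lie on at most one circle. So the point--circle incidence graph contains no $K_{3,2}$ with three points on one side and two circles on the other. A K\H{o}v\'ari--S\'os--Tur\'an type count then gives the base-case bound
\[
I(P,\mathcal C)\ll n\,m^{2/3}+m,
\]
which is effective when $m$ is small compared with $n$.

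\textbf{Step 2 (cutting into pseudo-segments).} Any two circles cross at most twice. The central combinatorial fact I would need, and the genuine contribution of \cite{MaTa06}, is that $n$ circles can be cut into
\[
X\ll n^{3/2}\log n
\]
connected arcs, any two of which intersect at most once. I would prove this by encoding the pairs of circles that form ``lenses'' as an ordered $0$--$1$ matrix. The key claims are:
\begin{itemize}
\item the lens structure forces this matrix to avoid certain small patterns;
\item the extremal number of such pattern-avoiding matrices is $O(n^{3/2}\log n)$;
\item cutting every circle at one point of each lens in a minimal lens-covering family destroys all double intersections.
\end{itemize}
I expect this step to be the main obstacle. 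It is where the specific geometry of circles, beyond their being $2$-intersecting curves, has to be exploited.

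\textbf{Step 3 (incidences with pseudo-segments).} I would run Sz\'ekely's crossing-number argument. Draw a graph on $P$ whose edges are consecutive incident points along each arc. It has at least $I-X$ edges, and its crossings are at most the number of pairs of arcs, so at most $X^2$. Because the arcs are pseudo-segments, the graph has no multi-edges. The crossing lemma then gives
\[
I\ll m^{2/3}X^{2/3}+m+X .
\]
Combined with Step 2 this yields $I\ll m^{2/3}n\log^{2/3}n+m+n^{3/2}\log n$.

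\textbf{Step 4 (balancing via a cutting).} To recover the full bound, I would take a $(1/r)$-cutting of the plane with respect to the $n$ circles. This gives $O(r^2)$ cells, each crossed by at most $n/r$ circles and containing about $m/r^2$ points, after a refinement so that points are evenly distributed. In each cell I would apply the better of:
\begin{itemize}
\item the Step 3 bound with $X\ll (n/r)^{3/2}\log(n/r)$;
\item the dual of the Step 1 bound.
\end{itemize}
I would handle incidences on cell boundaries separately, at cost $O(m+n)$. Choosing $r$ to balance $r^2(m/r^2)^{2/3}(n/r)\log^{2/3}$ against the trivial cell-wise terms is what produces the mixed term $m^{6/11}n^{9/11}\log^{2/11}m$. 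The term $m^{2/3}n^{2/3}$ comes from the regime where a standard three-parameter partition argument dominates.

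I have not verified that this parameter choice reproduces the exact exponents $6/11$ and $9/11$. This is only a sketch of the strategy. In the paper the result is simply cited from \cite{MaTa06}.
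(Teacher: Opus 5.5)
The paper does not prove this statement; it cites it from \cite{MaTa06}. So the only question is whether your outline reproduces the stated bound. Even granting Step 2 as a black box, it does not, because Steps 3 and 4 each contain a concrete error. In Step 3 you bound the crossing number by the number of pairs of arcs, $X^2$. But cutting creates no new crossings: every crossing in your drawing is an intersection point of two of the original circles. Hence the crossing number is at most $n(n-1)$, and the crossing lemma gives
\[
I \ll m^{2/3}n^{2/3} + m + X \ll m^{2/3}n^{2/3} + m + n^{3/2}\log n .
\]
This is where the term $m^{2/3}n^{2/3}$ comes from, not from any partition argument. It already proves the theorem when $m \gg n^{5/4}\log^{3/2} n$. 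Your version has $m^{2/3}n\log^{2/3}n$ in place of $m^{2/3}n^{2/3}$, so it loses a factor $n^{1/3}\log^{2/3}n$ and can never recover that term.

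Step 4 cuts in the wrong space, and this is the missing idea. A cutting of the plane ($O(r^2)$ cells, each with $n/r$ circles and $m/r^2$ points) cannot produce exponents with denominator $11$:
\begin{itemize}
\item with the corrected Step 3 bound, the cell sums give $m^{2/3}n^{2/3}+r^{1/2}n^{3/2}\log n+m$, which is best at $r=1$;
\item with $I\ll mn^{2/3}+n$, you only recover the Clarkson et al.\ bound $m^{3/5}n^{4/5}$;
\item your own balancing gives roughly $m^{2/5}n^{6/5}$, which is worse than the trivial $nm^{1/2}$ whenever $m<n^2$.
\end{itemize}
What is needed is to use that circles form a three-parameter family. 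Lift by $(x,y)\mapsto(x,y,x^2+y^2)$, so that circles become planes. Then dualize, so that the $n$ circles become points and the $m$ points become planes in $\mathbb R^3$. A $(1/r)$-cutting of these $m$ planes has $O(r^3)$ cells, each crossed by at most $m/r$ planes. It can be refined so that each cell contains at most $n/r^3$ dual points. Applying the corrected Step 3 bound in each cell gives, up to boundary incidences handled in the standard way,
\[
I \ll r^{1/3}m^{2/3}n^{2/3} + r^{-3/2}n^{3/2}\log n + mr^2 .
\]
The choice $r\approx n^{5/11}m^{-4/11}\log^{6/11}n$ balances the first two terms at $m^{6/11}n^{9/11}\log^{2/11}n$. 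This $r$ satisfies $r\ge 1$ exactly in the range $m\lesssim n^{5/4}\log^{3/2}n$ that Step 3 leaves open. The term $mr^2$ is of lower order once the logarithm is tracked as $\log(m^3/n)$, as Marcus and Tardos do. The range $m^3<n$ is covered by $I\ll mn^{2/3}+n\ll n$.

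There are two smaller slips. First, $K_{3,2}$-freeness gives $I\ll mn^{2/3}+n$ (count point triples on circles) and $I\ll nm^{1/2}+m$ (count circle pairs through points). It does not give $nm^{2/3}+m$ directly; that bound holds only as a weakening of the second. Second, the cutting bound of \cite{MaTa06} holds for arbitrary pseudo-circles. So the circle-specific geometry enters through the lifting, not in Step 2.
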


As is standard in incidence theory, the incidence theorem above can be used to bound the number of $k$-rich circles determined by a point set, as follows.

\begin{cor} \label{cor:richcircles}
Let $P$ be a point set and $k \geq 3$. Let $\mathcal C_k$ denote the set of all circles which contain at least $k$ points from $P$. Then
\[
|\mathcal C_k| \ll \frac{|P|^3}{k^{11/2}}\log|P| + \frac{|P|^2}{k^{3}}+ \frac{|P|}{k}.  
\]
\end{cor}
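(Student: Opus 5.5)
The plan is to derive Corollary~\ref{cor:richcircles} from Theorem~\ref{thm:circleincidences} by the usual double counting. Write $N=|\mathcal C_k|$ and $n=|P|$. Every circle in $\mathcal C_k$ contains at least $k$ points of $P$, so
\[
kN \le I(P,\mathcal C_k) \ll n^{2/3}N^{2/3} + n^{6/11}N^{9/11}\log^{2/11} n + n + N.
\]
One technical point: the $+N$ term must be absorbable. Since $k\ge 3$, each circle in $\mathcal C_k$ carries a large multiple of itself in incidences. We will replace $k$ by $k/C$ if necessary, or note that the implied constant in front of $N$ is fixed. For $k$ larger than a suitable absolute constant, the $N$ term is dominated by the left side. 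For bounded $k\ge 3$, the trivial bound $N\le \binom{n}{3}\ll n^3\ll n^3\log n/k^{11/2}$ already suffices, because three points determine a circle. After this adjustment, $kN$ is at most a constant times the largest of the three remaining terms, and we split into cases according to which term dominates.

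If the first term dominates, then $kN\ll n^{2/3}N^{2/3}$, which gives $N^{1/3}\ll n^{2/3}/k$ and hence $N\ll n^2/k^3$. If the third term dominates, then $kN\ll n$, so $N\ll n/k$. If the middle term dominates, then $kN\ll n^{6/11}N^{9/11}\log^{2/11}n$, which gives $N^{2/11}\ll n^{6/11}\log^{2/11} n/k$. Raising both sides to the power $11/2$ yields $N\ll n^3\log n/k^{11/2}$. Summing the three cases gives the stated bound.

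The only step needing care is the absorption of the $|\mathcal C|$ term, which is handled by splitting at a large constant threshold for $k$ as described above. Everything else is routine algebra.
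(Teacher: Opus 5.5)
Your proposal is correct and is exactly the standard double-counting derivation from Theorem~\ref{thm:circleincidences} that the paper alludes to; the paper states this corollary without writing out a proof. Your handling of the $|\mathcal C|$ term is a valid way to make the argument rigorous: you absorb it into the left-hand side once $k$ exceeds a large absolute constant, and for bounded $k$ you use the trivial bound $|\mathcal C_k|\le\binom{|P|}{3}$ (three points determine a circle).
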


The following simple corollary will be used twice in this paper and so we give a quick proof here.

\begin{cor} \label{cor:norichcircles}
    Let $P \subset \mathbb R^2$ be finite and sufficiently large. There exists a subset $P' \subset P$ such that $|P'| \geq |P|/2$ and any circle $C$ centred at a point $p \in P'$ satisfies $|C \cap P'| \leq |P|^{1/2}$.
\end{cor}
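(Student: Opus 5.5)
We delete every point that is the centre of a rich circle, and use Corollary~\ref{cor:richcircles} to show there are few such centres. Set $n=|P|$ and $k=\lceil n^{1/2}\rceil$. Call a point $p\in P$ \emph{bad} if some circle centred at $p$ contains more than $k$ points of $P$. Let $P'$ be the set of points of $P$ that are not bad.

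The conclusion then holds for $P'$ automatically. Any circle $C$ centred at $p\in P'$ satisfies
\[
|C\cap P'|\le |C\cap P|\le k,
\]
since $p$ is not bad. (If one insists on the strict bound $n^{1/2}$ rather than $\lceil n^{1/2}\rceil$, take $k=\lfloor n^{1/2}\rfloor$ instead; the argument below is unchanged.) So it remains to show that $P'$ has at least $n/2$ points, i.e.\ that at most $n/2$ points are bad.

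Each bad point $p$ is the centre of at least one circle in $\mathcal C_{k}$, the family of circles containing at least $k$ points of $P$. A circle has only one centre, so the map sending each bad point to a chosen circle of $\mathcal C_k$ centred at it is injective. Hence the number of bad points is at most $|\mathcal C_k|$.

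Applying Corollary~\ref{cor:richcircles} with $k\approx n^{1/2}$ gives
\[
|\mathcal C_k|\ll \frac{n^3\log n}{n^{11/4}}+\frac{n^2}{n^{3/2}}+\frac{n}{n^{1/2}} = n^{1/4}\log n+n^{1/2}+n^{1/2}\ll n^{1/2}.
\]
For $n$ sufficiently large this is at most $n/2$, so $|P'|\ge n/2$, which proves Corollary~\ref{cor:norichcircles}.

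The only step needing care is the bookkeeping: applying Corollary~\ref{cor:richcircles} at the threshold $k\approx n^{1/2}$ (which requires $k\ge 3$, true for large $n$), and absorbing the implied constant using the "sufficiently large" hypothesis. The threshold $n^{1/2}$ leaves a lot of room; already $k=n^{\varepsilon}$ for a suitable fixed $\varepsilon$ would suffice. I do not expect any step to be a genuine obstacle.
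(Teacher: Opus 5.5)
Your argument is correct and is essentially the paper's proof: Corollary~\ref{cor:richcircles} at threshold $k\approx |P|^{1/2}$ gives $O(|P|^{1/2})$ rich circles, and deleting their centres (at most one per circle) leaves at least $|P|/2$ points once $|P|$ is large. The only fix is the one you already flagged: take $k=\lfloor n^{1/2}\rfloor$ from the start, since $k=\lceil n^{1/2}\rceil$ only gives $|C\cap P'|\le\lceil n^{1/2}\rceil$, which can exceed $n^{1/2}$ when $n$ is not a perfect square.
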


\begin{proof}
    Corollary \ref{cor:richcircles} tells us that there are at most $O(|P|^{1/2})$ circles $C$ such that $|C \cap P| \geq |P|^{1/2}$. If such a rich circle has a centre in $P$, we remove this centre from the point set. We remove at most $c|P|^{1/2} \leq |P|/2$ points to arrive at our final set $P' \subset P$. In the inequality above we have used the assumption that $P$ is sufficiently large.
\end{proof}

For a point set $P \subset \mathbb R^2$ and $d>0$, let $r_P(d)$ denote the number of representations of $d$ as a distance in $P$, that is,
\[
r_P(d):=| \{(p,q) \in P \times P : \|p-q\|=d \}|.
\]
We use Theorem \ref{thm:circleincidences} to bound the number of distances with many representations.

\begin{lemma} \label{lem:richd}
Let $P \subset \mathbb R^2$ and let $t \geq 1$. Then
\[
|\{ d : r_P(d) \geq t \}| \ll \frac{|P|^{\frac{15}{2}}\log|P|}{t^{\frac{11}{2}}}.
\]
\end{lemma}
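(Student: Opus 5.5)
Let $D_t=\{d : r_P(d)\ge t\}$ and $n=|P|$. The idea is to reduce this to counting rich circles via Corollary~\ref{cor:richcircles}, with pigeonholing supplying the richness. Each pair $(p,q)$ with $\|p-q\|=d$ is an incidence between the point $q$ and the circle $C(p,d)$ of radius $d$ centred at $p$. So for a fixed $d \in D_t$ there are at least $t$ incidences between $P$ and the $n$ circles $\{C(p,d): p\in P\}$.

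First we make these incidences rich. Circles receiving fewer than $t/(2n)$ incidences contribute fewer than $t/2$ in total. Let $\mathcal C_d$ be the set of circles $C(p,d)$ with $|C(p,d)\cap P|\ge t/(2n)$. Then
\[
\sum_{C\in\mathcal C_d}|C\cap P|\ge t/2 ,
\]
and since each circle contains at most $n$ points, $|\mathcal C_d|\ge t/(2n)$. Circles of different radii are distinct, so the families $\mathcal C_d$ for $d\in D_t$ are disjoint. Hence, with $k=t/(2n)$,
\[
|D_t|\cdot \frac{t}{2n}\le |\mathcal C_{k}| .
\]

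We may assume $k\ge 3$; otherwise $t\le 6n$ and the claimed bound exceeds the trivial bound $|D_t|\le n^2$ (since $n^{15/2}/t^{11/2}\gg n^{2}$ when $t\ll n$). Corollary~\ref{cor:richcircles} then gives
\[
|D_t|\ll \frac{n}{t}\left(\frac{n^3\log n}{k^{11/2}}+\frac{n^2}{k^3}+\frac{n}{k}\right).
\]
Substituting $k=t/(2n)$, the main term is
\[
\frac{n}{t}\cdot\frac{n^{3+11/2}\log n}{t^{11/2}}=\frac{n^{19/2}\log n}{t^{13/2}} .
\]
This is not yet the claimed form, so I would refine by dyadic pigeonholing in the richness of each circle rather than using the crude threshold. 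Alternatively, since $t\le n^2$, we have $n^{19/2}/t^{13/2}\le n^{15/2}\cdot n^{2}/t^{13/2}$, and $n^2/t\ge 1$ goes the wrong way. A better route is not to multiply by $|\mathcal C_d|$ but to count incidences directly.

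The refined count goes as follows. Summed over $d\in D_t$, there are at least $t|D_t|$ incidences between $P$ and the family of all circles centred in $P$ with radius in $D_t$. Split these circles dyadically by richness $\sim 2^j$. The number of circles at richness $\sim 2^j$ is at most $|\mathcal C_{2^j}|$, contributing at most $2^{j}|\mathcal C_{2^j}|\ll n^3\log n\,2^{-9j/2}+n^2 2^{-2j}+n$ incidences. Keeping only circles with richness $\ge t/(2n)$ still retains half the incidences, so
\[
t|D_t|\ll n^3\log n\,(t/n)^{-9/2}+n^2(t/n)^{-2}+n\cdot(\log n\ \text{levels}).
\]
The first term gives
\[
|D_t|\ll \frac{n^{15/2}\log n}{t^{11/2}},
\]
exactly as claimed.

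The main obstacle is controlling the remaining terms. They give $n^4/t^3$ and $n\log n/t$, and both must be dominated by $n^{15/2}\log n/t^{11/2}$. This holds when $t\le n^{7/5}$ (respectively $t\le n^{3/2}$). For larger $t$, Theorem~\ref{thm:GK} gives $t^2|D_t|\le Q(P)\ll n^3\log n$, which I would use to cover that range. I would also need to double-check the linear term more carefully, restricting the dyadic sum to levels $2^j\le n$.
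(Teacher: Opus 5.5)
Your dyadic step is sound, and it is a legitimate variant of the paper's argument. Discarding the circles $C(p,d)$ with fewer than $t/(2n)$ points keeps at least $t|D_t|/2$ incidences, and summing Corollary~\ref{cor:richcircles} over the levels $2^j\gg t/n$ gives
\[
t|D_t|\ll \frac{n^{15/2}\log n}{t^{9/2}}+\frac{n^4}{t^2}+n\log n \qquad (t\gg n).
\]
The paper instead applies Theorem~\ref{thm:circleincidences} directly to the $n|D_t|$ circles $C_p(d)$, $p\in P$, $d\in D_t$, and solves for $|D_t|$.

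The gap is in your treatment of $t>n^{7/5}$. Theorem~\ref{thm:GK} gives $|D_t|\ll n^3\log n/t^2$. This is below the target $n^{15/2}\log n/t^{11/2}$ only when $t^{7/2}\ll n^{9/2}$, i.e.\ $t\ll n^{9/7}$. Since $9/7<7/5$, Guth--Katz is weaker than what you need on the entire range where you invoke it. At $t=n^{7/5}$ it permits $|D_t|\approx n^{1/5}\log n$, while the lemma asserts $|D_t|\ll n^{-1/5}\log n$, i.e.\ $D_t=\emptyset$ for large $n$. Guth--Katz only forces $D_t=\emptyset$ once $t\gg n^{3/2}\log^{1/2}n$, so the window $n^{7/5}<t\ll n^{3/2}\log^{1/2}n$ is left uncovered.

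What you need in this range is that $D_t$ is empty. The paper gets this from Theorem~\ref{thm:unit}: $r_P(d)\ll n^{4/3}$, so $D_t=\emptyset$ for $t\ge Cn^{4/3}$. Since $4/3<7/5$, your main inequality then covers everything below that threshold. Alternatively, your own inequality already suffices, because its right-hand side does not involve $|D_t|$. If $D_t\neq\emptyset$, then $t\le t|D_t|$ forces $t\ll n^{15/11}\log^{2/11}n$, and $15/11<7/5$.

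There is also a smaller arithmetic slip. The inequality $n\log n/t\le n^{15/2}\log n/t^{11/2}$ holds exactly when $t\le n^{13/9}$, not $t\le n^{3/2}$. This is harmless because $13/9>7/5$.
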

\begin{proof}
Let $X:= \{ d : r_P(d) \geq t \}$. We restrict our attention to the case when $t\geq C|P|$ for a sufficiently large absolute constant $C$. In case $t < C|P|$, the trivial bound $|X| \leq \frac{|P|^2}{t}$ is sufficiently strong to imply the lemma. We can also restrict to the case $t \leq c|P|^{4/3}$ for a suitable constant $c$, since otherwise Theorem \ref{thm:unit} implies that $X$ is empty. Let $C_p(d)$ denote the circle with radius $d$ centred at $p \in \mathbb R^2$. Define the set of circles
\[
\mathcal C:= \{ C_p(d) : p \in P,\ d \in X \}
\]
and note that $|\mathcal C|=|P||X|$. Since every pair $(p,q)$ such that $\|p-q\| \in X$ contributes an incidence to the count $I(P, \mathcal C)$, it follows from Theorem \ref{thm:circleincidences} that
\[
     |X|t \leq I(P, \mathcal C) \ll |P|^{4/3}|X|^{2/3} + |P|^{15/11}|X|^{9/11} \log^{2/11} |P| + |P||X|.
\]
By taking $C$ sufficiently large, we can ensure that the third term is not dominant and thus simplify this to
\[
    |X|t \ll |P|^{4/3}|X|^{2/3} + |P|^{15/11}|X|^{9/11} \log^{2/11}|P| .
\]
A rearrangement of this latter inequality gives 
\[
|X| \ll \frac{|P|^4}{t^3} + \frac{|P|^{\frac{15}{2}}\log|P|}{t^{\frac{11}{2}}} \ll  \frac{|P|^{\frac{15}{2}}\log|P|}{t^{\frac{11}{2}}},
\]
where the last inequality uses the assumption that $t \leq c|P|^{4/3}$.
\end{proof}
We will also need a bound for the number of isosceles triangles determined by a point set. The following result can be derived from Theorem \ref{thm:circleincidences}, and is strong enough for our purposes.
\begin{lemma} \label{lem:isosceles}
 Let $P \subset \mathbb R^2$ be a finite point set. Then
 \[
 T(P):= |\{(p,q_1,q_2) \in P^3 : \|p-q_1\|=\|p-q_2\| \}| \ll |P|^{2+\frac{2}{9}}\log^{\frac{2}{9}}|P|.
 \]
\end{lemma}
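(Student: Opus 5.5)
The plan is to write $T(P)$ as a sum of squares of circle-richnesses and then split dyadically by richness, using two bounds for the number of circles of a given richness: a trivial counting bound and Corollary~\ref{cor:richcircles}. Write $n=|P|$. For each $p\in P$, the pairs $(q_1,q_2)$ with $\|p-q_1\|=\|p-q_2\|$ are exactly the pairs lying on a common circle centred at $p$. Let $\mathcal C_P$ be the family of circles $C_p(d)$ with $p\in P$ and $d=\|p-q\|>0$ for some $q\in P$. Then
\[
T(P)= n+\sum_{C\in \mathcal C_P}|C\cap P|^2 ,
\]
where the $n$ accounts for the triples $q_1=q_2=p$.

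\emph{Step 1: two bounds on circles of richness about $k$.} For $k\ge 1$ a power of two, let $N_k$ be the number of circles in $\mathcal C_P$ with $k\le |C\cap P|<2k$. Each pair $(p,q)$ with $q\neq p$ lies on exactly one circle of $\mathcal C_P$ centred at $p$, so the circles of $\mathcal C_P$ carry at most $n^2$ incidences in total. This gives the trivial bound $N_k\le n^2/k$. For $k\ge 3$, Corollary~\ref{cor:richcircles} bounds the number of all $k$-rich circles, and hence
\[
N_k\ll \frac{n^3\log n}{k^{11/2}}+\frac{n^2}{k^3}+\frac nk .
\]

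\emph{Step 2: dyadic summation.} We have $\sum_{C\in\mathcal C_P}|C\cap P|^2\ll \sum_k k^2N_k$. The terms with $k<3$ contribute $O(n^2)$ by the trivial bound. Set $K:=(n\log n)^{2/9}$, which is chosen so that $n^2K = n^3\log n\cdot K^{-7/2}$.
\begin{itemize}
\item For $3\le k\le K$, use the trivial bound $k^2N_k\le n^2k$. Summing over dyadic $k$ gives $O(n^2K)$.
\item For $k>K$, use the rich-circle bound:
\[
k^2N_k\ll \frac{n^3\log n}{k^{7/2}}+\frac{n^2}{k}+nk .
\]
The first term is a decreasing geometric series in $k$, so it sums to $O(n^3\log n\, K^{-7/2})=O(n^2K)$. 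The second term sums to $O(n^2)$. For the third term, note that $k\le n$ always, so there are $O(\log n)$ dyadic values of $k$ and the third term contributes $O(n\sum_k k)=O(n^2)$ as a geometric sum.
\end{itemize}
Altogether $T(P)\ll n^2+n^2K = O(n^{2+2/9}\log^{2/9}n)$, as claimed.

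\emph{Main obstacle.} The only substantive input is the rich-circle count of Corollary~\ref{cor:richcircles}. The one delicate point is choosing the crossover $K$ so that the trivial regime and the rich regime balance. One should also check that the lower-order terms $n^2/k^3$ and $n/k$ never dominate, and by the computation above they contribute only $O(n^2)$.
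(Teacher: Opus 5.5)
Your proposal is correct and follows essentially the same argument as the paper: write $T(P)$ as $\sum|C\cap P|^2$ over circles centred at points of $P$, bound the low-richness part by the threshold times the $n^2$ total incidences, bound the high-richness part dyadically via Corollary~\ref{cor:richcircles}, and balance at the threshold $(n\log n)^{2/9}$. The only differences are cosmetic. You separate out the degenerate triples $(p,p,p)$ explicitly, and you treat the low-richness range dyadically, whereas the paper uses the single estimate $\sum|C\cap P|^2\le\Delta\sum|C\cap P|$.
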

\begin{proof}
 Let $\Delta \geq 3$ be a parameter to be specified later. For each $p \in P$, let $\mathcal C(p)$ denote the minimal set of circles centred at $p$ and covering $P$, and let $\mathcal C= \cup_{p \in P} \ \mathcal{C}(p)$. Note that
\begin{align*}
T(P)&= \sum_{p \in P} \sum_{C \in \mathcal C(p)} |C \cap P|^2
\\&= \sum_{p \in P} \sum_{C \in \mathcal C(p): |C \cap P| \leq \Delta} |C \cap P|^2 +  \sum_{p \in P} \sum_{C \in \mathcal C(p): |C \cap P|>\Delta} |C \cap P|^2
\\&\leq \Delta \sum_{p \in P} \sum_{C \in \mathcal C(p)} |C \cap P| +   \sum_{C \in \mathcal C: |C \cap P|>\Delta} |C \cap P|^2
\\& \leq \Delta |P|^2 + \sum_{j \in \mathbb N : 2^{j-1}\Delta \leq |P|} \sum_{ \substack{C \in \mathcal C: \\ 2^{j-1}\Delta \leq |C \cap P|<2^j\Delta}} |C \cap P|^2
\\& \ll \Delta |P|^2 + \sum_{j \in \mathbb N : 2^{j-1}\Delta \leq |P|}  \left ( \frac{|P|^3}{(2^j\Delta)^{7/2}}\log|P| + \frac{|P|^2}{2^j\Delta}+ 2^j\Delta|P| \right )
\\ & \ll \Delta |P|^2 + \frac{|P|^3}{\Delta^{7/2}}\log|P|,
\end{align*}
where the third inequality above follows from an application of Corollary \ref{cor:richcircles}. Setting $\Delta= |P|^{2/9} \log^{2/9}|P|$ completes the proof.
\end{proof}

Note that a better estimate for the number of isosceles triangles determined by $P$ can be found in work of Pach and Tardos~\cite{PaTa02}, where a bound of $O(|P|^{2.137})$ was proven. We give a weaker bound above in an effort to reduce the number of external results we need to call upon in this paper. The important thing for the forthcoming application of Lemma \ref{lem:isosceles} is that the exponent is strictly less than $7/3$.

\section{Distance Sidon sets}
\label{sec:distance}

The goal of this section is to prove Theorem \ref{thm:main}. The idea for this proof is to construct a hypergraph encoding distance quadruples of $P$ and then apply Corollary \ref{corl:ind} to lower bound the independence number of the hypergraph. Before we are ready to do this, we need to make some preparations by passing to a large subset of $P$ such that the graph defined using this subset satisfies the conditions of Corollary \ref{corl:ind}, and this is the purpose of the next lemma. In the following statement (and throughout the rest of this paper), the notation $b(p,q)$ refers to the perpendicular bisector of two distinct points $p,q \in \mathbb R^2$.

\begin{lemma} \label{lem:subset}
Let $P \subset \mathbb R^2$ with $|P|=n$. There exists $P' \subset P$ such that $|P'| \gg n$ and $P'$ satisfies all of the following conditions:
\begin{enumerate}[label=(\alph*)]
    \item Let $T(p)$ denote the number of isosceles triangles in $P'$ which contain $p$. Then, for all $p \in P'$
    \[
T(p) \ll n^{11/9} \log^{2/9}n.
    \]
    \item For all $p \in P'$,
    \[
    | \{ q \in P' : |b(p,q) \cap P'| \geq n^{\frac{2}{3}- \frac{1}{90}} \}| \ll n^{\frac{17}{30}} \log^{\frac{2}{9}}n.
    \]
    \item For all $p \in P'$
    \[
    |\{ q \in P' : r_{P'}(\|p-q\|) > n^{\frac{4}{3}-\frac{1}{99}} \}| \ll n^{\frac{5}{9}} \log n.
    \]
    \item For all $p \in P'$ and any circle $C$ centred at $p$, $|C \cap P'| \leq n^{1/2}$.
\end{enumerate}
\end{lemma}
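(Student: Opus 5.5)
The plan is a deletion argument. For each of (a), (b), (c) I will bound a global count for the whole set $P$, and then use Markov's inequality to show that only a small fraction of points exceed the stated per-point threshold. I delete those bad points and finish with Corollary~\ref{cor:norichcircles} to get (d).

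The key observation is that every quantity in (a)--(c) is monotone under passing to subsets. If $P'' \subset P$ then $T_{P''}(p) \le T_P(p)$, $|b(p,q)\cap P''| \le |b(p,q)\cap P|$ and $r_{P''}(d) \le r_P(d)$. So any point that is good with respect to $P$ stays good in every subset. Likewise (d) uses the threshold $n^{1/2}$, and $n^{1/2} \ge |P''|^{1/2}$ for any subset $P''$. So I may first remove the bad points for (a), (b), (c) computed with respect to $P$, keeping at least $n/2$ points, say. Then I apply Corollary~\ref{cor:norichcircles} to the remaining set, losing at most another factor of $2$. This gives $|P'| \ge n/4$.

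\textbf{Condition (a).} Summing $T(p)$ over $p \in P$ counts each isosceles triangle $O(1)$ times. Hence Lemma~\ref{lem:isosceles} gives $\sum_p T(p) \ll n^{20/9}\log^{2/9} n$. By Markov, at most $n/6$ points have $T(p) > C n^{11/9}\log^{2/9}n$ for a suitable constant $C$, and I delete them.

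\textbf{Condition (b).} A point $r \in b(p,q)\cap P$ is exactly an isosceles triple $(r,p,q)$. Therefore
\[
\sum_{(p,q)} |b(p,q)\cap P| \le T(P),
\]
and the number of ordered pairs $(p,q)$ with $|b(p,q)\cap P| \ge n^{2/3-1/90}$ is
\[
\ll n^{20/9 - 2/3 + 1/90}\log^{2/9} n = n^{141/90}\log^{2/9}n .
\]
By Markov, at most $n/6$ points $p$ belong to more than $C n^{51/90}\log^{2/9} n = C n^{17/30}\log^{2/9}n$ such pairs, and I delete them.

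\textbf{Condition (c).} Set $t = n^{4/3-1/99}$. The number of ordered pairs $(p,q)$ with $r_P(\|p-q\|) > t$ is $\sum_{d:\, r_P(d)>t} r_P(d)$. I split this sum dyadically over the ranges $r_P(d) \in [2^j t, 2^{j+1}t)$ and bound the number of distances in each range by Lemma~\ref{lem:richd}. This gives a geometric series bounded by
\[
\ll \frac{n^{15/2}\log n}{t^{9/2}} = n^{3/2 + 9/198}\log n = n^{17/11}\log n .
\]
The average per point is then $n^{6/11}\log n \le n^{5/9}\log n$. By Markov, at most $n/6$ points have more than $C n^{5/9}\log n$ such partners $q$, and I delete them.

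I expect the only delicate step to be (c), specifically getting the dyadic summation exponent right so that it lands below $n^{5/9}$. The rest is bookkeeping, including checking the monotonicity under deletion, which is what lets the four deletions be combined.
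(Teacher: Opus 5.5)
Your proof is correct and follows essentially the same deletion argument as the paper: Lemma~\ref{lem:isosceles} for (a) and (b), Lemma~\ref{lem:richd} for (c), and Corollary~\ref{cor:norichcircles} for (d). The paper deletes sequentially, halving and recomputing relative to the current subset at each step, whereas you use monotonicity to do all deletions relative to $P$ at once, which is equally valid. The one substantive difference is in (c): the paper applies Lemma~\ref{lem:richd} once at $t=n^{4/3-1/99}$ to get $O(n^{2/9}\log n)$ rich distances and multiplies by the $O(n^{4/3})$ bound of Theorem~\ref{thm:unit}, giving $O(n^{14/9}\log n)$ pairs, while your dyadic summation gives the slightly stronger $O(n^{17/11}\log n)$.
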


\begin{proof}

Let us assume for simplicity that $n$ is a multiple of $8$. If this is not the case then we can achieve it by deleting at most $7$ elements. We begin with (a).  By Lemma \ref{lem:isosceles},
\begin{equation} \label{sum41}
\sum_{p \in P} T(p) \ll |P|^{2+\frac{2}{9}} \log^{\frac{2}{9}} |P|.
\end{equation}
Label the elements of $P=\{p_1,\dots , p_n\}$ so that $T(p_1) \leq T(p_2) \leq \dots \leq T(p_n)$. Note that $T(p_{n/2}) \leq C|P|^{11/9} \log^{2/9} |P| $ for some absolute constant $C$, since otherwise
\[
\sum_{i= \frac{n}{2}+1}^n T(p_i) \geq \frac{C}{2}|P|^{2+\frac{2}{9}} \log^{\frac{2}{9}}|P|
\]
and this contradicts \eqref{sum41} provided that $C$ is sufficiently large. Let $P_1 = \{p_1, \dots, p_{n/2} \}$, and so each element of $P_1$ is involved in at most $O(n^{11/9} \log^{2/9} n)$ isosceles triangles determined by $P_1$. For the rest of this proof, we will pass to further subsets of $P_1$, with the final subset $P'$ having size $\Omega(n)$. This will complete the proof of (a).


We say that the pair $(p,q) \in P \times P$ is \textit{bad} if $|b(p,q) \cap P_1| \geq n^{\frac{2}{3}-\frac{1}{90}}$. It follows from Lemma \ref{lem:isosceles} that 
\[
\{ \text{bad pairs} \}\cdot n^{\frac{2}{3}-\frac{1}{90}} \leq T(P_1) \ll n^{2+\frac{2}{9}}\log ^{\frac29}n
\]
and thus there are $O(n^\frac{47}{30}\log ^{\frac29}n)$ bad pairs. Now let $B(p)$ denote the number of bad pairs involving $p$. Again, we can pass to a subset $P_2 \subset P_1$ such that $|P_2| \geq |P_1|/2=n/4$ and with each element $p \in P_2$ being involved in $O(n^{\frac{17}{30}}\log ^{\frac29}n)$ bad pairs. Any sufficiently large subset of $P_2$ will therefore satisfy condition (b) of the lemma.

Now we turn to (c). Apply Lemma \ref{lem:richd} with $t=n^{\frac{4}{3}-\frac{1}{99}}$. It follows that
\[
|\{d : r_{P_2}(d) \geq n^{\frac{4}{3}-\frac{1}{99}}\}| \ll n^{2/9}\log n.
\]
It then follows from the unit distance bound Theorem \ref{thm:unit} that
\begin{equation} \label{pairs}
|\{(p,q) \in P_2 \times P_2 : r_{P_2}(\|p-q\|) \geq n^{\frac{4}{3}-\frac{1}{99}}\}| \ll n^{14/9}\log n.
\end{equation}
Now, fix $p \in P_2$ and define
\[
m(p):= | \{ q \in P_2 : r_{P_2}(\|p-q\|) \geq n^{\frac{4}{3}-\frac{1}{99}}\}|.
\]
It follows from \eqref{pairs} that 
\[
\sum_{p \in P_2} m(p) \ll n^{\frac{14}{9}}\log n.
\]
We can then repeat the previous argument to choose a subset $P_3 \subset P_2$ consisting of the first half of the elements of $P_2$ when we order in increasing value of $m(p)$. Thus $|P_3| \geq |P_2|/2 \geq n/8$ and all $ p \in P_3$ satisfy $m(p) \ll n^{\frac{5}{9}}\log|P|$. In particular, condition (c) is satisfied for any subset $P' \subset P_3$ with $|P'| \gg |P_3|$.

We make one final refinement of the point set to ensure that condition (d) is satisfied. An application of Corollary \ref{cor:norichcircles} gives us our final set $P' \subset P_3$ with $|P'| \gg |P_3| \gg |P|$ and such that no circle centred at an element of $P'$ contains more than $n^{1/2}$ elements of $P'$.\end{proof}
We are now ready to prove Theorem \ref{thm:main}.

\begin{proof}[Proof of Theorem \ref{thm:main}]
Let $P' \subset P$ be the set given by Lemma \ref{lem:subset}. 
We construct a rank-$4$ hypergraph $\cH$ to encode distance quadruples, although with some care taken to treat certain problematic quadruples separately. The edges of $\cH$ are defined as follows.

\begin{enumerate}
    \item The $4$-edges of $\cH$ are $4$-sets $E\subset P'$ such that there is a partition \(E=\{p,q\}\cup\{s,t\}\) with $\|p-q\|=\|s-t\|=d$ and $r_{P'}(d) \leq n^{\frac{4}{3} - \frac{1}{99}}$.
    \item The $3$-edges of $\cH$ are the $3$-sets \(E\subset P'\) such that
there exist distinct \(p,q,s\in E\) with $\|p-q\|=\|p-s\|$ and $|b(q,s) \cap P'| \leq n^{\frac{2}{3} - \frac{1}{90}}$.
    \item If $\|p-q\|=d$ and $r_{P'}(d) > n^{\frac{4}{3} - \frac{1}{99}}$ then $\{p,q\}$ forms a $2$-edge.
    \item  If $|b(p,q) \cap P'| > n^{\frac{2}{3} - \frac{1}{90}}$ then $\{p,q\}$ forms a $2$-edge.
\end{enumerate}

The important idea here is that the hypergraph is defined in such a way that an independent set in the hypergraph is a distance Sidon set. 
\begin{obs}
\label{indepinH}
Every independent set in \(\mathcal H\) is a distance Sidon set.
\end{obs}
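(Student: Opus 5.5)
We argue by contraposition: we take a subset $S \subset P'$ that is \emph{not} distance Sidon and show that $S$ contains an edge of $\mathcal H$. Since $S$ is not distance Sidon, there is a nontrivial distance quadruple $(p,q,s,t)\in S^4$ with $\|p-q\|=\|s-t\|=d$ and $\{p,q\}\neq\{s,t\}$. We may assume $d>0$: if $d=0$ then $p=q$ and $s=t$, and a nontrivial relation of this form with unordered pairs is not a genuine repeated distance between pairs of distinct points. So $p\neq q$ and $s\neq t$, and $\{p,q\},\{s,t\}$ are two distinct $2$-subsets of $S$. We split into cases according to whether these pairs are disjoint, and then according to the richness thresholds used to define $\mathcal H$.

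\emph{Case 1: $\{p,q\}\cap\{s,t\}=\emptyset$.} Then $E=\{p,q,s,t\}$ is a $4$-set with the partition $\{p,q\}\cup\{s,t\}$. If $r_{P'}(d)\leq n^{\frac43-\frac1{99}}$, then $E$ is a $4$-edge by rule (1). Otherwise $r_{P'}(d)> n^{\frac43-\frac1{99}}$, and $\{p,q\}$ is a $2$-edge by rule (3). In either case $S$ contains an edge.

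\emph{Case 2: the pairs share exactly one point.} After relabelling we may write the quadruple as $\|p-q\|=\|p-s\|$ with $p,q,s$ distinct. If $|b(q,s)\cap P'|\leq n^{\frac23-\frac1{90}}$, then $\{p,q,s\}$ is a $3$-edge by rule (2). Otherwise $\{q,s\}$ is a $2$-edge by rule (4).

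The pairs cannot share both points, since $\{p,q\}\neq\{s,t\}$. Hence every non-Sidon subset of $P'$ contains an edge of $\mathcal H$, so every independent set is distance Sidon.

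The only step requiring care is this bookkeeping. One must check that the relabelling in Case 2 covers all the ways a shared point can appear, namely $p=s$, $p=t$, $q=s$, and $q=t$, all by symmetry of the norm. One must also check that the thresholds in rules (1)–(4) are exactly complementary (using $\leq$ versus $>$), so that every configuration falls into one of the rules. I expect no real obstacle here beyond that verification.
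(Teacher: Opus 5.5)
Your proof is correct and follows essentially the same route as the paper. You split according to whether the two equal-distance pairs are disjoint or share a point, and the complementary thresholds put you in a $4$-edge or a rule-(3) $2$-edge in the first case, and in a $3$-edge or the rule-(4) $2$-edge $\{q,s\}$ in the second; your exclusion of $d=0$ just makes explicit the paper's implicit convention that distances are taken between distinct points.
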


\begin{proof}
Let $I\subseteq P'$ be independent. Suppose that two distinct pairs in $I$
determine the same distance. If the two pairs are disjoint, then either the
corresponding distance is represented at most $n^{\frac43-\frac1{99}}$ times
in $P'$, in which case these four points span a $4$-edge, or it is represented
more than $n^{\frac43-\frac1{99}}$ times, in which case one of the pairs is a
$2$-edge. Both contradict independence.

If the two pairs share a point, then for some distinct $p,q,s\in I$ we have
\[
\|p-q\|=\|p-s\|.
\]
If $|b(q,s)\cap P'|\leq n^{\frac23-\frac1{90}}$, then $\{p,q,s\}$ is a
$3$-edge; otherwise $\{q,s\}$ is a $2$-edge. Again this contradicts
independence. Hence all distances determined by $I$ are distinct.
\end{proof}

It remains to show that $\mathcal{H}$ contains an independent set of size $\Omega(n^{1/3})$. Theorem \ref{thm:GK} ensures that $\overline{\Delta}_4(\mathcal{H}) \ll n^2 \log n$. Lemma~\ref{lem:isosceles} ensures that $\overline{\Delta}_3(\mathcal{H})\ll |P|^{1+\frac{2}{9}}\log^{\frac{2}{9}}|P|$. A bound for $\overline{\Delta}_2(\mathcal{H})$ is given by conditions (b) and (c) of Lemma \ref{lem:subset}. Each vertex $p \in V(H)$ is involved in at most  $O(n^{\frac{17}{30}} \log^{\frac29}n + n^{\frac59} \log n)=O(n^{\frac{17}{30}} \log^{\frac29}n)$ $2$-edges.  It follows that
\[
\overline{\Delta}_2(\mathcal{H}) \ll n^\frac{17}{30}\log ^{\frac29}n
\]
Therefore we can choose 
\begin{equation*}
\overline{\Delta}_4=\Theta(n^2 \log n), \quad \overline{\Delta}_3=\Theta(n^{\frac{11}{9}} \log^{\frac{2}{9}} n), \quad \text{and} \quad \overline{\Delta}_2=\Theta(n^\frac{17}{30}\log ^{\frac29}n)
\end{equation*}
such that $\overline{\Delta}_\ell(\mathcal{H})\leq \overline{\Delta}_\ell$ for $\ell\in \{2,3,4\}$. As in the statement of Corollary \ref{corl:ind}, let
\[
\overline{M}=\max \{
\overline{\Delta}_2, \overline{\Delta}_3^{1/2}, \overline{\Delta}_4^{1/3} \}.
\]
  We conclude that $\overline M=\Theta( n^{\frac{2}{3}} \log^{\frac{1}{3}} n)$. We will now verify that the following four inequalities hold with $f= n^{\frac{1}{100}}$:
\begin{align}
 \Delta_{3,4}(\cH) &\leq \frac{\overline M}{f} \label{term1},
 \\ \Delta_{2,4}(\cH) &\leq \frac{\overline M^2}{f} \label{term2},
  \\ \Delta_{2,3}(\cH) &\leq \frac{\overline M}{f} \label{term3},
   \\ \Delta_{K_3}(\cH) &\leq \frac{\overline M^2}{f} \label{term4}.
\end{align}
We begin with \eqref{term1}. For fixed $p,q,s \in P'$, we need to bound the number of $t \in P'$ such that $\{p,q,s,t \}$ forms a distance quadruple. It follows from condition (d) of Lemma \ref{lem:subset} that there are $O(n^{1/2})$ possible choices for $t \in P'$, and thus $\Delta_{3,4}(\cH) \ll n^{1/2}$. This verifies \eqref{term1}.

Next we consider $\Delta_{2,4}(\cH)$. Fix $p,q \in P'$. We need to bound the number of sets $\{s,t \} \subset P'$ such that $\{p,q,s,t\}$ forms a distance quadruple satisfying the additional restriction described in the definition of the $4$-edges of $\cH$. There are two kinds of solutions that may occur, namely 
\[
d=\|p-q\|=\|s-t\|
\]
and
\begin{equation} \label{type2}
\|p-s\|=\|q-t\|.
\end{equation}
For the first of these, we may assume that $r_{P'}(d) \leq n^{\frac{4}{3}- \frac{1}{99}}$, since otherwise there are no $4$-edges corresponding to such solutions. It therefore follows that there are at most $n^{\frac{4}{3}- \frac{1}{99}}$ choices for $s$ and $t$. 

Now consider how many choices of $s,t$ exist such that \eqref{type2} holds. This quantity is equal to
\begin{align*}
\sum_d |C_p(d) \cap P'| |C_q(d) \cap P'| &\leq \left ( \sum_d |C_p(d) \cap P'|^2 \right)^{1/2} \left ( \sum_d |C_q(d) \cap P'|^2 \right)^{1/2}
\\& = | \{ (s,t) \in P' \times P' : \|p-s\|=\|p-t\|\}|^{1/2}
\\& \cdot|\{ (s,t) \in P' \times P' : \|q-s\|=\|q-t\|\}|^{1/2}
\\& \ll n^{11/9} \log^{2/9}n.
\end{align*}
The first inequality above is an application of Cauchy-Schwarz, while the last uses condition (a) of Lemma \ref{lem:subset}. We have therefore shown that $\Delta_{2,4}(\mathcal{H}) \ll n^{\frac{4}{3}- \frac{1}{99}}$, which verifies \eqref{term2}.

Next, we consider $\Delta_{2,3}(\cH)$. Fix distinct $p,q \in P'$. We need to bound the number of choices for $s$ such that the set $ \{p,q,s\}$ forms an isosceles triangle satisfying the additional constraint in the definition of the $3$-edges of $\cH$. Again, there are two equations to consider, namely
\[
\|p-q\|=\|p-s\|
\]
and
\begin{equation} \label{isoscelescase}
\|p-s\|=\|q-s\|.
\end{equation}
For the first of these equations, it follows from condition (d) of Lemma \ref{lem:subset} that there are $O(n^{1/2})$ choices for $s \in P'$. For the second of these equations, we are asking for the number of solutions to $s \in b(p,q)$. We may assume that $|b(p,q) \cap P'| \leq n^{\frac{2}{3}-\frac{1}{90}}$, since otherwise there are no $3$-edges in $\cH$ arising from such solutions. It therefore follows that there are at most $n^{\frac{2}{3}-\frac{1}{90}}$ choices for $s$ which satisfy \eqref{isoscelescase}, and thus $\Delta_{2,3}(\cH) \ll n^{\frac{2}{3}-\frac{1}{90}}$, which verifies \eqref{term3}.

Finally, we need to verify \eqref{term4}. For fixed $p \in P'$, we need to bound the number of $q,s \in P'$ such that each of $\{p,q\}, \{p,s\}$ and $\{q,s \}$ are $2$-edges in $\cH$. It follows from conditions (b) and (c) of Lemma \ref{lem:subset} that $p$ is contained in $O(n^{\frac{17}{30}} \log^\frac{2}{9}n)$ $2$-edges. In particular,
\[
\Delta_{K_3}(\cH) \leq \max_{p \in P'} |\{ q \in P' : \{p,q \} \text{ is a $2$-edge}\}|^2  \ll n^{\frac{17}{15}} \log^{\frac{4}{9}} n.
\]
We have now verified \eqref{term4}, and we have therefore verified that all of the conditions of Corollary \ref{corl:ind} hold. It follows that
\[
\alpha(\mathcal{H})
\gg
|V(\mathcal{H})|
\min_{2 \leq \ell \leq k}
\left\{
\left(
\frac{\log f}{\overline{\Delta}_\ell}
\right)^{1/(\ell - 1)}
\right\}
\gg
n^{1/3}.
\]
Now let $I$ be a subset of $P$ which forms an independent set in $\mathcal{H}$ of maximal size. By Observation~\ref{indepinH}, it follows that $I$ is a distance Sidon set, and the proof is complete.
\end{proof}

\section{Rhombus-free sets}
\label{sec:rhombi}
In this section we prove Theorem~\ref{thm:rhomingrid}. Recall that a rhombus consists of four distinct points $a,b,c,d \in \mathbb R^2$ such that for some relabelling
\[
\|a-b\|=\|b-c\|=\|c-d\|=\|d-a\|.
\]
We refer to the pairs $\{a,c\}$ and $\{b,d\}$ as the diagonals of the rhombus. The other four pairs formed from the set $\{a,b,c,d\}$ are said to be adjacent.

Recall that the number of rhombi in the grid $[n]^2$ is $O(n^4 \log n)$. More generally, the same asymptotic upper bound holds for arbitrary finite planar point sets. We include the proof of this more general statement here, since it may be useful for extending Theorem~\ref{thm:rhomingrid} to general planar point sets.

\begin{lemma} \label{lem:rhombicount}
Let $P\subseteq \mathbb{R}^2$. Then $P$ spans at most $O(|P|^2 \log |P|)$ many rhombi. 
\end{lemma}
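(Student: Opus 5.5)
The plan is to encode each rhombus by the reflection that swaps one pair of opposite vertices, and then to bound a weighted count of "rich" reflection lines using Guth--Katz-type information. Let $n=|P|$. If $a,b,c,d$ form a rhombus with diagonals $\{a,c\}$ and $\{b,d\}$, then $b,d$ both lie on the perpendicular bisector $b(a,c)$. They are also symmetric about the midpoint $m=(a+c)/2$, so $d=a+c-b$. Equivalently, the reflection $\sigma_\ell$ in the line $\ell=b(a,c)$ swaps $a$ and $c$ and fixes $b$ and $d$.

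For a line $\ell$, write $s_\ell$ for the number of pairs $\{x,\sigma_\ell x\}\subset P$ with $x\notin \ell$, and $k_\ell:=|\ell\cap P|$. Once the swapped pair $\{a,c\}$ is fixed, the pair $\{b,d\}\subset\ell$ is a pair of points of $\ell\cap P$ symmetric about the fixed point $m$, so it is determined by $b$. Hence the number of rhombi is at most
\[
\sum_{\ell} s_\ell\, k_\ell ,
\]
where the sum runs over lines $\ell$ that are perpendicular bisectors of pairs of $P$. Two trivial facts will be used: $\sum_\ell s_\ell\le n^2$, since each pair of $P$ determines exactly one bisector, and $\sum_\ell \binom{k_\ell}{2}\le n^2$.

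The next step is a dyadic decomposition of $\sum_\ell s_\ell k_\ell$. Split according to $k_\ell\approx K$ and $s_\ell\approx S$. By the Szemer\'edi--Trotter Theorem~\ref{ST}, the number of lines with $k_\ell\ge K$ is $O(n^2/K^3+n/K)$. The number of lines with $s_\ell\ge S$ is $O(n^2/S)$ by the first trivial fact. Combining only these two bounds gives about $n^{7/3}$, which falls short of $n^2\log n$, so an extra ingredient is needed.

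The ingredient I would try first is the Guth--Katz machinery behind Theorem~\ref{thm:GK}, applied to the isometries $g=\sigma_\ell$. For such $g$ one has $|P\cap gP| = 2s_\ell+k_\ell$, and I would aim for a bound of the form
\[
|\{\ell : |P\cap \sigma_\ell P|\ge k\}| \ll \frac{n^2}{k^2}\qquad (k\ge 2).
\]
Reflections form only a two-parameter family, so one expects to gain a factor of $n$ over the $n^3/k^2$ bound for general rigid motions. To reach them inside the orientation-preserving framework, compose with a fixed reflection $\tau$: each $\sigma_\ell$ is $\tau\circ h$ for a rotation or translation $h$, and then $|P\cap\sigma_\ell P|=|\tau P\cap hP|$. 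The Guth--Katz lines-in-$\mathbb R^3$ argument should still apply to this pair of sets.

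Granting the displayed bound, the conclusion is a dyadic summation. We have $s_\ell k_\ell\le (2s_\ell+k_\ell)^2$, and each dyadic range $k\approx 2^j$ contributes $O(n^2)$. Summing over the $O(\log n)$ ranges gives $\sum_\ell s_\ell k_\ell\ll n^2\log n$. The lines with $|P\cap\sigma_\ell P|\le 1$ contribute nothing: there $s_\ell=0$, because a single swapped pair already gives $|P\cap \sigma_\ell P|\ge 2$.

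The main obstacle is the displayed rich-reflection bound. It is essentially a bisector-energy estimate, and I am not certain it follows from the Guth--Katz framework. If it fails, I would fall back to a weaker bound, for instance $O(n^{7/3})$ from the Szemer\'edi--Trotter computation, together with Lemma~\ref{lem:isosceles}. For the grid $[n]^2$, which is the case needed for Theorem~\ref{thm:rhomingrid}, I would instead count directly via lattice vectors $u,v$ with $\|u\|=\|v\|$. That count reduces to $\sum_{r} R(r)^2$, where $R(r)$ is the number of representations of $r$ as a sum of two squares up to size $n^2$, and this sum is $O(n^2\log n)$ by the classical bound for such second moments.
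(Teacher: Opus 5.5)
There is a genuine gap, and it starts at your first reduction. The sum $\sum_\ell s_\ell k_\ell$ counts pairs $(\{a,c\},b)$ with $a\neq c$ and $b\in P\cap b(a,c)$. So it is exactly the number of isosceles triangles with a marked apex, namely $\tfrac12\bigl(T(P)-|P|^2\bigr)$ in the notation of Lemma~\ref{lem:isosceles}; the requirement $d=a+c-b\in P$ has been discarded. No $O(n^2\log n)$ bound is known for this quantity: the best is $O(n^{2.137})$ (Pach and Tardos), and the paper remarks that $n^{2+c}$ might be attainable. Your remaining steps would therefore have to settle an open problem.

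The tool you propose for this is also false. For the vertices of a regular $n$-gon, each of the $n$ symmetry axes satisfies $\sigma_\ell P=P$, so $|P\cap\sigma_\ell P|=n$ for $n$ lines, while your displayed bound allows only $O(1)$ of them. (The rhombus count is still small there because $k_\ell\le 2$, but the step $s_\ell k_\ell\le(2s_\ell+k_\ell)^2$ forgets this.) The Guth--Katz framework cannot help either. For $p\neq q$ there is exactly one reflection sending $p$ to $q$, so no incidence problem remains. Your bound is then a bisector-energy estimate, and bisector energy can be of order $n^3$. The fallbacks do not close the gap: $O(n^{7/3})$ is weaker than the lemma, and the $\sum_r R(r)^2$ computation, though correct for $[N]^2$, does not address general $P$.

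The missing idea is to keep the fourth vertex, which makes the two diagonals interchangeable. The diagonal lines $\ell_1\supset\{a,c\}$ and $\ell_2\supset\{b,d\}$ are perpendicular, meet at the common midpoint $m$, and each is the perpendicular bisector of the other diagonal. So you may label them with $|\ell_2\cap P|\le|\ell_1\cap P|$ and split dyadically according to $t\le|\ell_1\cap P|<2t$.

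If $t\le n^{1/2}$, Theorem~\ref{ST} gives $O(n^2/t^3)$ such lines $\ell_1$. Each carries $O(t^2)$ pairs $\{a,c\}$, and each pair fixes $m$ and $\ell_2$. There are then fewer than $2t$ choices of $b\in\ell_2\cap P$, after which $d$ is determined, giving $O(n^2)$ rhombi in total.

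If $t>n^{1/2}$, there are $O(n/t)$ such lines. The rhombus is determined by $\ell_1$, a vertex $a\in\ell_1\cap P$ and an adjacent vertex $b\in P\setminus\ell_1$, since the foot of $b$ on $\ell_1$ is $m$. This again gives $O(n^2)$.

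Summing over $O(\log n)$ scales gives the lemma; this is the paper's proof. The ordering $|\ell_2\cap P|\le|\ell_1\cap P|$ is what caps the choices for $b$, and it is exactly what is unavailable for isosceles triangles.
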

\begin{proof}

Every rhombus defines two perpendicular lines $\ell_1$ and $\ell_2$, each passing through two opposing vertices. Let $m$ be the intersection point of $\ell_1$ and $\ell_2$ and let $\ell_i^+$ be one of the connected components of $\ell_i\setminus\{m\}$, for $i\in\{1,2\}$. Each pair of points $(a,b)\in\ell_1^+\times \ell_2^+$ uniquely defines a rhombus $\{a,b,2m-a,2m-b\}$, see Figure~\ref{rhombusfigure}.
\begin{figure}[h]
\begin{tikzpicture}[scale=1]

  \coordinate (A) at (0,0);
  \coordinate (B) at (3,1);
  \coordinate (C) at (4,4);
  \coordinate (D) at (1,3);
  \coordinate (E) at (2,2);

  \draw[thick] (A) -- (B) -- (C) -- (D) -- cycle;

  \node[left] at (A) {$c$};
  \node[right] at (B) {$d$};
  \node[above] at (C) {$a$};
  \node[above]  at (D) {$b$};
  \node[above]  at (E) {$m$};

  \fill (A) circle (0.08);
  \fill (B) circle (0.08);
  \fill (C) circle (0.08);
  \fill (D) circle (0.08);
  \fill[gray] (E) circle (0.08);

\path (A) -- (C) coordinate[pos=-0.2](A') coordinate[pos=1.2](C');
\draw[dashed] (A') -- (A) -- (C) -- (C')node[pos=0.9, below] {$\ell_1$};

\path (B) -- (D) coordinate[pos=-0.3](B') coordinate[pos=1.3](D');
\draw[dashed] (B') -- (B)node[pos=0.2, below] {$\ell_2$} -- (D) -- (D');

\end{tikzpicture}
\caption{A rhombus with center $m$ and diagonals $\ell_1,\ell_2$.}
\label{rhombusfigure}
\end{figure}
Let $L$ be the set of lines passing through at least two points of $P$ and let $$D:=\{(\ell_1,\ell_2)\in L^2\;|\;  \ell_1\perp \ell_2 \land |\ell_1\cap P|\geq |\ell_2\cap P|\}.$$
Further, let $R(P)$ denote the number of rhombi defined by $P$. For a line $\ell$ and point $x$, write $r_{\ell}(x):=|\{\{u,v\}\in \binom{\ell\cap P}{2}\;|\;u+v=2x\}|$. For two distinct non parallel lines $\ell_1$ and $\ell_2$, let $m_{\ell_1,\ell_2}$ denote the point of intersection of the two lines. Then
    $$R(P)\leq \sum_{\substack{(\ell_1,\ell_2)\in D }}r_{\ell_1}(m_{\ell_1,\ell_2})r_{\ell_2}(m_{\ell_1,\ell_2}).$$
Given $k\geq 1$, let $$R_k:=\sum_{\substack{(\ell_1,\ell_2)\in D\\  \\2^k\leq |\ell_1 \cap P|<2^{k+1}}}r_{\ell_1}(m_{\ell_1,\ell_2})r_{\ell_2}(m_{\ell_1,\ell_2}).$$
Put \(t=2^k\). First suppose that \(t\leq |P|^{1/2}\). By the Szemer\'{e}di-Trotter Theorem (Theorem~\ref{ST}), the number of lines \(\ell_1\in L\) satisfying
\(|\ell_1\cap P|\geq t\) is \(O(|P|^2/t^3)\). For a fixed such line
\(\ell_1\), every pair \(\{a,c\}\in \binom{\ell_1\cap P}{2}\)
determines the midpoint $m=(a+c)/2$, and hence determines the unique line \(\ell_2\) through \(m\) perpendicular
to \(\ell_1\). Since \((\ell_1,\ell_2)\in D\), we have $|\ell_2\cap P|\leq |\ell_1\cap P|<2t$, and therefore \(r_{\ell_2}(m)\leq 2t\). Thus

$$R_k\leq\sum_{\substack{\ell_1\in L\\ t\leq |\ell_1 \cap P|<2t}}\sum_{\{a,c\}\in{\binom{\ell_1\cap P}{2}}}r_{\ell_2}(m_{\ell_1,\ell_2})\ll \frac{|P|^2}{t^3} \binom{2t}{2}2t\ll |P|^2.$$
If $t> |P|^{1/2}$, then the number of lines $\ell_1\in L$ such that $|\ell_1\cap P|\geq t$ is at most $O(|P|/t)$. For fixed \(\ell_1\), every rhombus counted by \(R_k\) is determined by choosing one vertex
\(a\in \ell_1\cap P\) and one adjacent vertex
\(b\in P\setminus \ell_1\). Therefore,
$$R_k \ll\sum_{\substack{\ell_1\in L\\ t\leq |\ell_1 \cap P|<2t}}\sum_{a\in\ell_1\cap P}|P\setminus \ell_1|\leq \sum_{\substack{\ell_1\in L\\ t\leq |\ell_1 \cap P|<2t}} 2t |P|\ll |P|^2.$$
In total, the number of rhombi is 
\begin{align*}   
R(P)&\leq \sum_{\substack{(\ell_1,\ell_2)\in D }}r_{\ell_1}(m_{\ell_1,\ell_2})r_{\ell_2}(m_{\ell_1,\ell_2})
    =\sum_{k=1}^{\lceil \log_2|P| \rceil}\sum_{\substack{(\ell_1,\ell_2)\in D\\   \\2^k\leq |\ell_1 \cap P|<2^{k+1}}}r_{\ell_1}(m_{\ell_1,\ell_2})r_{\ell_2}(m_{\ell_1,\ell_2})  \\
    &=\sum_{k=1}^{\lceil \log_2|P| \rceil}R_k   
    \ll  |P|^2\log|P|,
    \end{align*}
completing the proof.
\end{proof}
We remark that Lemma~\ref{lem:rhombicount} may be of independent interest. 
In contrast, the analogous bound for isosceles triangles is open: 
The best known upper bound is \(O(n^{2.137})\), due to Pach and Tardos~\cite{PaTa02}. In light of the remarkable recent breakthrough constructions giving many occurrences of a fixed distance in \cite{OpenAI2026PlanarUnitDistanceBlog}, we would not be surprised to learn that there exists a set of points $P \subset \mathbb R^2$ which contains $n^{2+c}$ isosceles triangles for some constant $c>0$. We now turn to the proof of Theorem \ref{thm:rhomingrid}. 

We prove a slightly more general result, establishing the existence of a large rhombus-free set inside $P$, provided that $P$ does not contain any rich lines. Note that Theorem \ref{thm:rhomingrid} follows immediately from Theorem \ref{thm:rhombigen}, since, for the point set $P=[n] \times [n]$, there is no line $\ell$ such that $|\ell \cap P| > |P|^{1/2}$.

\begin{theorem} \label{thm:rhombigen}
Let $n$ be sufficiently large and let $P \subset \mathbb R^2$ be a set with $|P|=n$ such that no line contains more than $n^{2/3-c}$ elements of $P$ for some absolute constant $c>0$. Then there exists a rhombus-free set $P' \subset P$ of size $|P'| \gg_c n^{2/3}$.
\end{theorem}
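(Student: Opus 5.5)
The plan is to apply Corollary~\ref{corl:ind} to the $4$-uniform hypergraph $\cH$ on vertex set $P$ whose edges are the rhombi spanned by $P$. By Lemma~\ref{lem:rhombicount}, $\cH$ has $O(n^2\log n)$ edges, so $\overline{\Delta}_4(\cH)\ll n\log n$. There are no $2$- or $3$-edges, so we may take $\overline{\Delta}_2=\overline{\Delta}_3=0$ (or tiny). This gives $\overline M=\Theta\bigl((n\log n)^{1/3}\bigr)$. If we can verify the codegree conditions with $f=n^{\varepsilon}$ for some $\varepsilon=\varepsilon(c)>0$, then Corollary~\ref{corl:ind} yields
\[
\alpha(\cH)\gg n\left(\frac{\log f}{n\log n}\right)^{1/3}\gg_c n^{2/3},
\]
and an independent set is exactly a rhombus-free set. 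The $K_3$ condition is vacuous because there are no $2$-edges. It therefore remains to show $\Delta_{3,4}(\cH)\le \overline M/f$ and $\Delta_{2,4}(\cH)\le \overline M^2/f$.

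For $\Delta_{3,4}$, the key observation is that three vertices of a rhombus determine the fourth up to a bounded number of choices. The fourth vertex is $a+c-b$ for whichever of the three points is the vertex opposite the missing one, so there are at most $3$ choices. Hence $\Delta_{3,4}(\cH)\le 3\le \overline M/f$ as soon as $f\le n^{1/3}/3$.

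The main work is $\Delta_{2,4}(\cH)\ll n^{2/3-\varepsilon}$, that is, for fixed $p,q$ bounding the number of rhombi containing both. I would split by the role of $\{p,q\}$.

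\emph{Case 1: $\{p,q\}$ is a diagonal.} The other diagonal lies on the perpendicular bisector $b(p,q)$ and is symmetric about the midpoint. It is therefore determined by one point of $b(p,q)\cap P$, giving at most $n^{2/3-c}$ rhombi by the rich-line hypothesis.

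\emph{Case 2: $\{p,q\}$ is a side.} The rhombus is $\{p,q,q+v,p+v\}$ with $\|v\|=\|p-q\|=:d$, so it is determined by the point $s=q+v\in C_q(d)\cap P$ together with the requirement $p+v\in P$. A naive bound is $|C_q(d)\cap P|$. This can be large for general $P$, and it is the main obstacle.

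To handle Case 2, I would first pass to a subset $P_0\subset P$ with $|P_0|\ge n/2$ using Corollary~\ref{cor:norichcircles}, so that every circle centred at a point of $P_0$ contains at most $n^{1/2}$ points of $P_0$. Then $\Delta_{2,4}\ll n^{1/2}+n^{2/3-c}\le n^{2/3-\min(c,1/6)}$. The rich-line hypothesis is inherited by subsets, and Lemma~\ref{lem:rhombicount} applies to $P_0$. Running the argument on $\cH[P_0]$ with $f=n^{\min(c,1/6)/2}$ then verifies all conditions, and the bound $|P'|\gg_c n^{2/3}$ follows, the dependence on $c$ entering only through $\log f$.
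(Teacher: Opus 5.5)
Your proposal is correct and follows the paper's proof essentially step for step. You prune with Corollary~\ref{cor:norichcircles}, apply Corollary~\ref{corl:ind} to the $4$-uniform rhombus hypergraph with $\overline{\Delta}_4\ll n\log n$ from Lemma~\ref{lem:rhombicount}, bound $\Delta_{3,4}\le 3$, and bound $\Delta_{2,4}$ by $n^{1/2}$ in the side case (circle condition) plus $n^{2/3-c}$ in the diagonal case (rich-line hypothesis applied to $b(p,q)$); your use of $\min(c,1/6)$ with $f=n^{\min(c,1/6)/2}$ just tidies the paper's choice $f=n^c$, which tacitly needs $c\le 1/6$ for the step $3n^{1/2}+n^{2/3-c}\le 4n^{2/3-c}$.
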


\begin{proof} First, apply Corollary \ref{cor:norichcircles} to obtain $Q \subset P$ with $|Q| \geq |P|/2$ and such that no circle centred at an element of $Q$ contains more than $n^{1/2}$ elements of $Q$.

Next, let $\mathcal H$ be the $4$-uniform hypergraph encoding rhombi in $Q$. That is, $\mathcal H$ is the hypergraph with vertex set $Q$ such that four distinct vertices $p,q,s,t \in Q$ form an edge if and only if the points form a rhombus. We will apply Corollary \ref{corl:ind} to find a large independent set in this 4-uniform hypergraph. It follows from Lemma~\ref{lem:rhombicount} that 
\[
\overline{\Delta}_4(\mathcal{H}) \ll n \log n.
\]
Set $\overline{\Delta}_4:=\Theta(n \log n)$ such that $\overline{\Delta}_4(\mathcal{H})\leq \overline{\Delta}_4$. Then, $\overline M:= \overline \Delta_4^{1/3}=\Theta(n \log n)^{1/3}$. 
Since $\mathcal H$ is $4$-uniform, there are no copies of $K_3$ in $\mathcal H$, and so $\Delta_{K_3}(\mathcal{H})=0$.  
We will apply Corollary \ref{corl:ind} with $f=n^c$. For this application to be valid, we need to verify that
\begin{equation} \label{aim34}
    \Delta_{3,4}(\cH) \leq \overline M/f
\end{equation}
and
\begin{equation} \label{aim24}
    \Delta_{2,4}(\cH) \leq \overline M^2/f.
\end{equation}

For \eqref{aim34}, we have the stronger bound $\Delta_{3,4}(\cH) \leq 3$. Indeed, for any three points $p,q,s \in \mathbb R^2$, there exist at most $3$ choices for $t \in \mathbb R^2$ such that $p,q,s,t$ forms a rhombus, and this worst case is realised when $p,q,s$ form an equilateral triangle.

Next, we will verify \eqref{aim24}. Fix $p,q \in Q$. There are two kinds of rhombi that involve $p$ and $q$ that we deal with separately; rhombi with $p$ and $q$ being adjacent, and rhombi such that $p$ and $q$ are on the diagonal. For the first of these possibilities, we need to bound the number of possible choices for $s,t \in Q$ such that
\[
\|p-q\|=\|q-s\|=\|s-t\|=\|t-p\|.
\]
With $p$ and $q$ fixed, it follows from Corollary \ref{cor:norichcircles} that there are at most $n^{1/2}$ choices for $s$. Once $s$ is determined, there are at most $3$ choices for $t$, and so there are at most $3n^{1/2}$ edges involving $p$ and $q$ which correspond to rhombi with $p$ and $q$ adjacent.

For the second of these cases, we fix $p$ and $q$ as a diagonal of the rhombus. Note that $s$ and $t$ must both lie on the perpendicular bisector $b(p,q)$ and be symmetric with respect to the midpoint of $p$ and $q$. Invoking the assumption of the theorem, there are at most $n^{2/3-c}$ choices for $s$, and the choice of $t$ is then determined. It follows that there are at most $n^{2/3-c}$ edges involving $p$ and $q$ which correspond to rhombi with $p$ and $q$ as diagonals. We have therefore shown that
\[
\Delta_{2,4}(\cH) \leq 3n^{1/2} + n^{2/3-c} \leq 4n^{2/3-c} \leq \overline M^2/n^c.
\]
All of the conditions of Corollary \ref{corl:ind} have now been verified, and it follows that
\[
\alpha(\cH) \gg |Q| \left(\frac{c  \log n}{n \log n} \right )^{1/3} = \Omega_c (n^{2/3}).
\]
The proof is now complete, since any independent set in $\cH$ corresponds to a rhombus-free set.
\end{proof}

\section{Concluding remarks}
Note that most of the proof of Theorem \ref{thm:rhombigen} works for an arbitrary point set, and that the condition that the point set avoids rich lines is only required to bound $\Delta_{2,4}(\cH)$. We suspect that it is possible to use the techniques from this paper to remove this condition from Theorem \ref{thm:rhombigen}. In particular, one can define the hypergraph encoding rhombi more carefully, introducing $2$-edges corresponding to points with rich bisectors. However, we could not make this argument work, and the best we can do is to prove that any $P \subset \mathbb R^2$ with $|P|=n$ contains a rhombus-free subset $P' \subset P$ such that
\[
|P'| \gg 
n^{2/3}\left(\frac{\log\log n}{\log n}\right)^{1/3},
\]
thus slightly improving on the lower bound which follows from Lemma \ref{lem:rhombicount}. We omit the details of the proof of this result.

\section{Acknowledgments}
Research of Felix Christian Clemen was supported by a PIMS Postdoctoral Fellowship PIMS-20260602-PDF. Jakob Führer and Oliver Roche-Newton were supported by the Austrian Science Fund (FWF) under the project PAT2559123. We thank Thomas Bloom, Dylan King, Jonathan Noel, Marcelo Sales and Audie Warren for discussions on related subset selection problems.

\bibliographystyle{abbrv} 
\bibliography{bibilo}

\end{document}